\theoremstyle{plain}
\newtheorem{theorem}{Theorem}
\newtheorem{proposition}{Proposition}
\newtheorem{lemma}{Lemma}
\newtheorem{definition}{Definition}
\newtheorem{assumption}{Assumption}
\newenvironment{assumption-alt}[1]
  {%
   \begin{assumption}}
  {\end{assumption}}
\theoremstyle{remark}
\newtheorem{remark}{Remark}
\newcommand{\Var}{\mathrm{Var}}
\newcommand{\op}{\mathrm{op}}
\newcommand{\comm}{\mathrm{comm}}
\newcommand{\within}{\mathrm{in}}
\newcommand{\across}{\mathrm{out}}
\newcommand{\avg}{\mathrm{avg}}
\DeclareMathOperator{\tr}{Tr}
\DeclareMathOperator{\aic}{AIC}
\DeclareMathOperator{\saic}{soft-AIC}
\DeclareMathOperator{\sbm}{SBM}
\newcommand{\gaic}[1]{\aic^{(#1)}}
\DeclareMathOperator{\scree}{scree}
\newcommand{\cL}{\mathcal{L}}
\newcommand{\cN}{\mathcal{N}}
\newcommand{\bP}{\mathbb{P}}
\newcommand{\bE}{\mathbb{E}}
\newcommand{\bR}{\mathbb{R}}
\newcommand{\bI}{\mathbb{I}}
\newcommand{\convas}{\xrightarrow{\mathrm{a.s.}}}
\newcommand{\semicirc}{\mathrm{sc}}
\newcommand{\tw}{\mathrm{TW}}
\newcommand{\soft}{\mathrm{soft}}
\let\hat\widehat
\let\tilde\widetilde
\title[Model selection in the spiked Wigner model]{Consistent model selection in the spiked Wigner model via AIC-type criteria}
\author[S. S. Mukherjee]{Soumendu Sundar Mukherjee}
\address{Statistics and Mathematics Unit, Indian Statistical Institute, 203 B. T.~Road, Kolkata 700108, India}
\email{ssmukherjee@isical.ac.in}
\begin{document}

\begin{abstract}
Consider the spiked Wigner model
\[
    X = \sum_{i = 1}^k \lambda_i u_i u_i^\top + \sigma G,
\]
where $G$ is an $N \times N$ GOE random matrix, and the eigenvalues $\lambda_i$ are all spiked, i.e. above the Baik-Ben Arous-P\'ech\'e (BBP) threshold $\sigma$. We consider AIC-type model selection criteria of the form
\[
    -2 \, (\text{maximised log-likelihood}) + \gamma \, (\text{number of parameters})
\]
for estimating the number $k$ of spikes. For $\gamma > 2$, the above criterion is strongly consistent provided $\lambda_k > \lambda_{\gamma}$, where $\lambda_{\gamma}$ is a threshold strictly above the BBP threshold, whereas for $\gamma < 2$, it almost surely overestimates $k$. Although AIC (which corresponds to $\gamma = 2$) is not strongly consistent, we show that taking $\gamma = 2 + \delta_N$, where $\delta_N \to 0$ and $\delta_N \gg N^{-2/3}$, results in a weakly consistent estimator of $k$. We further show that a soft minimiser of AIC, where one chooses the least complex model whose AIC score is close to the minimum AIC score, is strongly consistent. Based on a spiked (generalised) Wigner representation, we also develop similar model selection criteria for consistently estimating the number of communities in a balanced stochastic block model under some sparsity restrictions.
\end{abstract}

\maketitle
\thispagestyle{empty}

\section{Introduction}\label{sec:intro}
    Model selection criteria such as the Akaike Information Criterion (AIC) \cite{akaike1998information} or the Bayesian Information Criterion (BIC) \cite{schwarz1978estimating} are staples of classical statistics. It is well-known that in classical fixed dimensional settings, AIC is not consistent in the sense that it tends to select models of higher complexity that the true one. BIC, which adds a more severe complexity penalty, is known to be consistent. See, for instance, \cite{bozdogan1987model, claeskens2008model}.

There is a recent line of work that studies the consistency properties of various classical model selection criteria in high-dimensional settings \cite{fujikoshi2014consistency, Yanagihara2015, yanagihara2015conditions, fujikoshi2016high, bai2018consistency, chakraborty2020high, hu2020detection, bai2022asymptotics}. For instance, \cite{bai2018consistency} showed that in the \emph{spiked covariance model} \cite{johnstone2001distribution} of high-dimensional principal components analysis, where the population covariance matrix has a small number of so-called spiked eigenvalues separated from the rest, AIC is consistent but requires more separation for the spiked eigenvalues than what the so-called Baik-Ben Arous-P\'ech\'e (BBP) threshold demands (loosely speaking, this is a spectral threshold for the spiked eigenvalues below which the behaviour of the extreme eigenvalues of a spiked model resemble those of a non-spiked pure noise model, thereby rendering consistent model selection moot). Later, in \cite{chakraborty2020high} and \cite{hu2020detection}, it was shown that the per parameter penalty in AIC can be modified suitably using results from random matrix theory so that the resulting criterion becomes strongly consistent above any arbitrary spectral threshold above the BBP threshold. Further, it was also shown in \cite{chakraborty2020high} that an estimator may be obtained by tweaking the above-mentioned criterion, which becomes weakly consistent just above the BBP threshold. Note that, in contrast to the fixed dimensional situations, BIC becomes inconsistent. Recently, AIC, BIC, and several other model selection criteria have also been analysed in the context of high-dimensional linear regression \cite{bai2022asymptotics, bai2023koo}.

    In this article, we consider another natural high-dimensional statistical model, the so-called \emph{spiked Wigner model}, where one observes a low-rank signal matrix perturbed (additively) by a Wigner random matrix. There has been a lot of recent interest in the signal detection problem for this model, especially for the rank-one version (an incomplete list of recent works include \cite{perry2018optimality, chung2019weak, jung2020weak, jung2021detection, el2020fundamental, chung2022asymptotic, chung2022weak, jung2023detection, pak2023optimal}). Below the BBP threshold for this model, although consistent detection is not possible, one can still detect the presence of the signal with non-trivial probability \cite{el2020fundamental}. 
In this article, we consider the problem of consistently estimating the number of spiked eigenvalues (i.e. eigenvalues above the BBP threshold), which is essentially a problem of model selection. We ask if classical model selection criteria such as the AIC work in this setting. To that end, consider model selection criteria of AIC type:
\[
    -2 \, (\text{maximised log-likelihood}) + \gamma \, (\text{number of parameters}),
\]
with AIC corresponding to the special case $\gamma = 2$. We refer to the above criterion as $\gaic{\gamma}$.

    Contrary to the spiked covariance model, where AIC is strongly consistent under a certain amount of extra signal above the BBP threshold, we cannot say here that AIC is strongly consistent. However, with a per parameter penalty factor of $2 + \delta_N$, where $\delta_N \to 0$ and $\delta_N \gg N^{-2/3}$, we can show weak-consistency of the resulting criterion $\gaic{2 + \delta_N}$. We also show that a certain soft minimiser of AIC is strongly consistent. In general, for $\gamma > 2$, $\gaic{\gamma}$ is strongly consistent provided $\lambda_k > \lambda_{\gamma}$ where $\lambda_{\gamma}$ is a threshold strictly above the BBP threshold. For $\gamma < 2$, $\gaic{\gamma}$ almost surely overestimates $k$. Our empirical results suggest that these results are valid under more general noise profiles that shun the assumption of independent entries.

    The AIC-type model selection criterion developed here may also be used for estimating the number of communities in networks. We present a stylised application to the stochastic block model with equal sized communities. We utilise the recently developed BBP-type phase transition results for such models in \cite{han2023spectral}. The criterion itself is based on the spiked GOE negative log-likelihood.

We present out theoretical results under a set of assumptions that are satisfied by a number of spiked random matrix models such as the spiked GOE, the spiked Wigner, the spiked generalised Wigner, stochastic block model, etc. Future advances in random matrix theory verifying these assumptions under more general models (such as spiked models with correlated noise profile, general stochastic block models, degree-corrected stochastic block models, etc.) would readily extend our results to such settings.

The rest of the paper is organised as follows. In Section~\ref{sec:set-up}, we discuss the spiked Wigner model and associated random matrix theoretic results, derive the AIC-type criteria explicitly and state our main results. In Section~\ref{sec:SBM}, we develop a stylised application to estimation of the number of communities in networks. In Section~\ref{sec:simu}, we present empirical results comparing the various proposed estimators via several simulation experiments. Finally, Section~\ref{sec:conc} contains a few concluding remarks and future research directions. Section~\ref{sec:proofs} in the Appendix collects all the proofs.

\section{Set-up and main results}\label{sec:set-up}
We first recall the definition AIC for a generic model selection problem. Suppose we have a collection of $q$ candidate (finite-dimensional) statistical models $M_0, \ldots, M_{q - 1}$. Let $d_{M_j}$ denote the dimension of the parameter space under model $M_j$. This is intuitively a simple measure of model complexity. AIC attaches a score
\[
    \aic_j = - 2 \cL_{M_j} + 2 d_{M_j}
\]
to the model $M_j$. Here $\cL_{M_j}$ is the maximised log-likelihood under model $M_j$. In a decision-theoretic set-up, AIC is derived as an unbiased estimator of the risk of the MLE when one measures loss in terms of the Kullback-Liebler divergence. One selects a model $M_{j^*}$ such that
\[
    j^* \in \arg\min_{0 \le j < q} \aic_j.
\]
In this article, we will consider a generalisation of the AIC scores where the per parameter penalty is changed from $2$ to some value $\gamma \ge 0$. We define
\[
    \gaic{\gamma}_j := - 2 \cL_{M_j} + \gamma d_{M_j}.
\]
Thus $\aic_j = \gaic{2}_j$.

\subsection{The spiked Wigner model}
Recall that an $N \times N$ Wigner matrix $W$ is a random symmetric matrix whose diagonal and above-diagonal entries are independent, the above-diagonal entries $W_{ij}, i > j$, having a symmetric law $\mu_1$ with variance $1$ and the diagonal entries $W_{ii}$ having a potentially different symmetric law $\mu_2$ with finite variance. We are interested in the \emph{spiked Wigner model}:
\begin{equation}\label{eq:spiked-Wigner}
    X = A + \frac{\sigma}{\sqrt{N}}W,
\end{equation}
where $A$ is a rank-$k$ positive semi-definite matrix with spectral decomposition \begin{equation}
    A = \sum_{i = 1}^k \lambda_i u_i u_i^\top,
\end{equation}
all whose non-zero eigenvalues are $> \sigma$, the BBP threshold for this model. We assume that $\lambda_1 \ge \cdots \ge \lambda_k$. We want to estimate the unknown rank $k$. Under model $M_j$, we have $k = j$ (under $M_0$, $A \equiv 0$). We denote the eigenvalues of $X$ as $\ell_1 \ge \cdots \ge \ell_N$.

To derive $\gaic{\gamma}$, we need to know the likelihood, which would be different for different distributions of the $W_{ij}$'s. To get around this issue, we will consider normally distributed entries and resort to \emph{universality phenomena} in random matrix theory due to which, under appropriate assumptions, the asymptotic behaviour of the spectrum in certain aspects becomes insensitive to the distribution of the entries. In fact, we will work with matrices from the so-called Gaussian Orthogonal Ensemble (GOE). An $N \times N$ GOE random matrix $G$ is a random symmetric matrix whose upper diagonal entries are i.i.d. $\cN(0, \frac{1}{N})$ and diagonal entries are i.i.d. $\cN(0, \frac{2}{N})$.  Thus we will consider the \emph{spiked GOE model}:
\begin{equation}\label{eq:spiked-GOE}
    X = A + \sigma G,
\end{equation}
under which we will derive $\gaic{\gamma}$ for both the cases $\sigma$ known and $\sigma$ unknown. We will then use the resulting criteria as \emph{a proxy for the actual AIC} for the more general spiked Wigner model \eqref{eq:spiked-Wigner}. This is a common approach in the model selection literature: derive the criterion under a Gaussian noise model and then use that criterion under non-Gaussian noise models as well, the negative log-likelihood under the Gaussian model now serving as a loss function. A non-exhaustive list of works that take this approach in the high-dimensional setting and prove consistency of the resulting criteria under non-Gaussian noise models include \cite{yanagihara2015conditions, bai2018consistency, chakraborty2020high, hu2020detection, bai2022asymptotics}.

The density of $X$ (with respect to the Lebesgue measure on $\bR^{N(N + 1)/2}$) under model \eqref{eq:spiked-GOE} is
\begin{equation}\label{eq:density-X}
    C_N \sigma^{-\frac{N(N + 1)}{2}}e^{-\frac{N}{4 \sigma^2} \tr(X - A)^2},
\end{equation}
where $C_N$ is a normalising constant. The simple form of the density above is the main reason for working with the spiked GOE model instead of some other Gaussian Wigner model.

\begin{assumption}\label{assmp:q-lambda1-fixed}
    We will assume throughout that $q$, the number of candidate models, is bounded and the eigenvalues of the signal matrix $A$, are all fixed (i.e. they do not change with $N$).
\end{assumption}

\begin{remark}
We will prove our consistency results under the assumption that $q$ is fixed. This may be a reasonable assumption if an a priori upper bound on the true model size is available. Anyway, one can get rid of this assumption, provided one has \emph{eigenvalue rigidity estimates} under the model in question. These are technical results from random matrix theory and are not always available in a ready-to-use form. We shall assume that these estimates are available and sketch how our proofs can then be modified to work without any restrictions on $q$.
\end{remark}

\begin{remark}
The assumption that the eigenvalues $\lambda_i$ of $A$ are fixed may be relaxed to the assumption that they converge almost surely to some limits $\eta_i$.
\end{remark}

\subsection{Random matrix theoretic results}
We now recall some relevant random matrix theoretic results on the spiked Wigner model. There is a substantial body of literature surrounding this model. Of particular relevance to us are the works \cite{capitaine2009largest, benaych2011eigenvalues, benaych2011fluctuations}.

It is well-known that bulk empirical spectral measure $\frac{1}{N}\sum_{i = 1}^N \delta_{\ell_i}$ converges weakly almost surely to the semi-circle law whose density is given by
\[
    \varrho_{\semicirc}(x; \sigma^2) = \frac{1}{2\pi \sigma^2} \sqrt{4\sigma^2 - x^2} \, \bI(|x| \le 2\sigma).
\]

To discuss the behaviour of the extreme eigenvalues, we need the function
\[
    \psi_{\sigma}(x) = x + \frac{\sigma^2}{x}.
\]
A plot of this function for $\sigma = 1$ is given in Figure~\ref{fig:graphs}-(a). Note that $\psi_{\sigma}$ achieves its minimum value of $2\sigma$ at $x = \sigma$, to the right of which it is strictly increasing. The first order behaviour of the edge eigenvalues under spiked models is given in the following Assumption (which holds for a wide class of models).

\begin{assumption}\label{assmp:bbp}(BBP-transision)
    We assume that under the true model $M_k$, we have the following:
\begin{enumerate}
    \item[(a)] For $1 \le i \le k$, $\ell_i \convas \psi_{\sigma}(\lambda_i)$.
    \item[(b)] For any fixed $i > k$, we have $\ell_i \convas \psi_{\sigma}(\sigma) = 2\sigma$.
\end{enumerate}
\end{assumption}

\begin{remark}\label{rem:swm-extr}
    Assumption~\ref{assmp:bbp} has been shown to hold
    \begin{enumerate}
        \item [(a)] under the spiked GOE model;
        \item [(b)] under the spiked Wigner model if $\mu_1, \mu_2$ satisfy a Poincar\'{e} inequality (cf. Theorem~2.1 of \cite{capitaine2009largest}), or if under Model $M_k$, the spiked eigenvectors $u_i$'s form a uniformly random $k$-frame (i.e. $k$ mutually orthogonal unit vectors) (cf. Theorem~2.1 of \cite{benaych2011eigenvalues}) (in fact, for the spiked Wigner model, much finer results are known from the work of \cite{Knowles2013} which allow the spikes to be vanishingly close to the BBP threshold of $\sigma$, in the sense that $|\lambda_k - \sigma| \gg N^{-1/3 + \eta}$, $\eta > 0$);
        \item [(c)] under the spiked generalised Wigner model \footnote{A symmetric random matrix $W$ is called a \emph{generalised Wigner matrix} if its upper triangular entries are zero mean independent random variables with a variance profile $\Var(W_{ij}) = \sigma_{ij}^2$, satisfying for each $i$, $\sum_{j} \sigma_{ij}^2 = 1$.} \cite{geng2024outliers};
        \item [(d)] under the stochastic block model with equal community sizes in certain regimes of sparsity \cite{han2023spectral}.
    \end{enumerate}
    One also expects similar results to hold under more general random matrix models, where the entries of the noise matrix are correlated, provided there is sufficient correlation decay. Some preliminary results on such correlated models may be found in \cite{adhikari2019edge, alt2020correlated, banerjee2024edge}.  
\end{remark}

The following (stronger) assumption can be used to remove the restriction in Assumption~\ref{assmp:q-lambda1-fixed} that $q$ is fixed. 
\begin{assumption-alt}{assmp:bbp}\label{assmp:bbp+rigidity}(BBP-transision and eigenvalue rigidity)
    We assume that under the true model $M_k$, there is $q = q_N$ (potentially growing with $N$) such that we have the following:
\begin{enumerate}
    \item[(a)] For $1 \le i \le k$, $\ell_i \convas \psi_{\sigma}(\lambda_i)$.
    \item [(b)] (Rigidity of non-spiked eigenvalues) There exists $\varpi \in (0, 1]$, such that with probability at least $1 - O(N^{-2})$ one has that for all $k < i < q_N$ that
        \[
            |\ell_i - F_{\semicirc}^{-1}(i / N) \sigma| \le \tilde{C}_N \sigma N^{-\varpi},
        \]
        where $F_{\semicirc}$ is the cumulative distribution function of the standard semi-circle law (whose density is $\rho_{\semicirc}(x; 1)$) and $\tilde{C}_N = o(N^{\eta})$ for any $\eta > 0$.
\end{enumerate}
\end{assumption-alt}

Rigidity estimates such as Assumption~\ref{assmp:bbp+rigidity}-(b) have been proved for generalised Wigner matrices (without any spikes) in \cite{erdHos2012rigidity} (with $q_N = N$ and $\varpi = 2/3$). For spiked Wigner models, Theorem 2.7 of \cite{Knowles2013} proves such an estimate (under the assumption of a sub-exponential decay of the tails of the entries of $W$) for $q_N = (\log N)^{\log\log N}$ and $\varpi = 2/3$. Therefore, using Weyl's eigenvalue inequalities, it follows from the two aforementioned results that for our spiked Wigner model, Assumption~\ref{assmp:bbp+rigidity} holds with $q_N = N$ and $\varpi = 2/3$. (In fact, the bulk eigenvalues differ from their classical locations $F_{\semicirc}^{-1}(i / N)$ by at most $O(N^{-1}(\log N)^{\log \log N})$.)

We shall also need the order of fluctuations of the non-spiked extreme eigenvalues for one of our main results.
\begin{assumption}\label{assmp:fluc-order}(Order of fluctuations of the non-spiked eigenvalues)
Assume that under model $M_j$, we have that for any fixed $i > j$,
\[
    N^{2/3}(\ell_i - 2\sigma) = O_P(1).
\]
\end{assumption}

\begin{remark}\label{rem:swm-fluc}
    Assumption~\ref{assmp:fluc-order} has been verified
    \begin{enumerate}
        \item [(a)] under the spiked GOE model (in fact, one also knows the limit distribution: $N^{2/3}(\ell_i - 2\sigma) \xrightarrow{d} \tw_{i - j}$, where $\tw_{\varpi}$ denotes the GOE Tracy-Widom distribution of order $\varpi$);
        \item [(b)] under the spiked Wigner model if $\mu_1$ and $\mu_2$ have sub-exponential tails and under Model $M_j$, the spiked eigenvectors $u_i$'s form a uniformly random $j$-frame (cf. Proposition~5.3 of \cite{benaych2011fluctuations}) (here also, one knows that $N^{2/3}(\ell_i - 2\sigma) \xrightarrow{d} \tw_{i - j}$). In fact, as mentioned earlier, Theorem 2.7 of \cite{Knowles2013} also provides a uniform result of this kind for $i = O((\log N)^{\log\log N})$ under the assumption of a sub-exponential decay of the tails of the entries of $W$.
    \end{enumerate}
\end{remark}

\begin{figure}
    \centering
    \begin{tabular}{cc}
        \includegraphics[scale = 0.25]{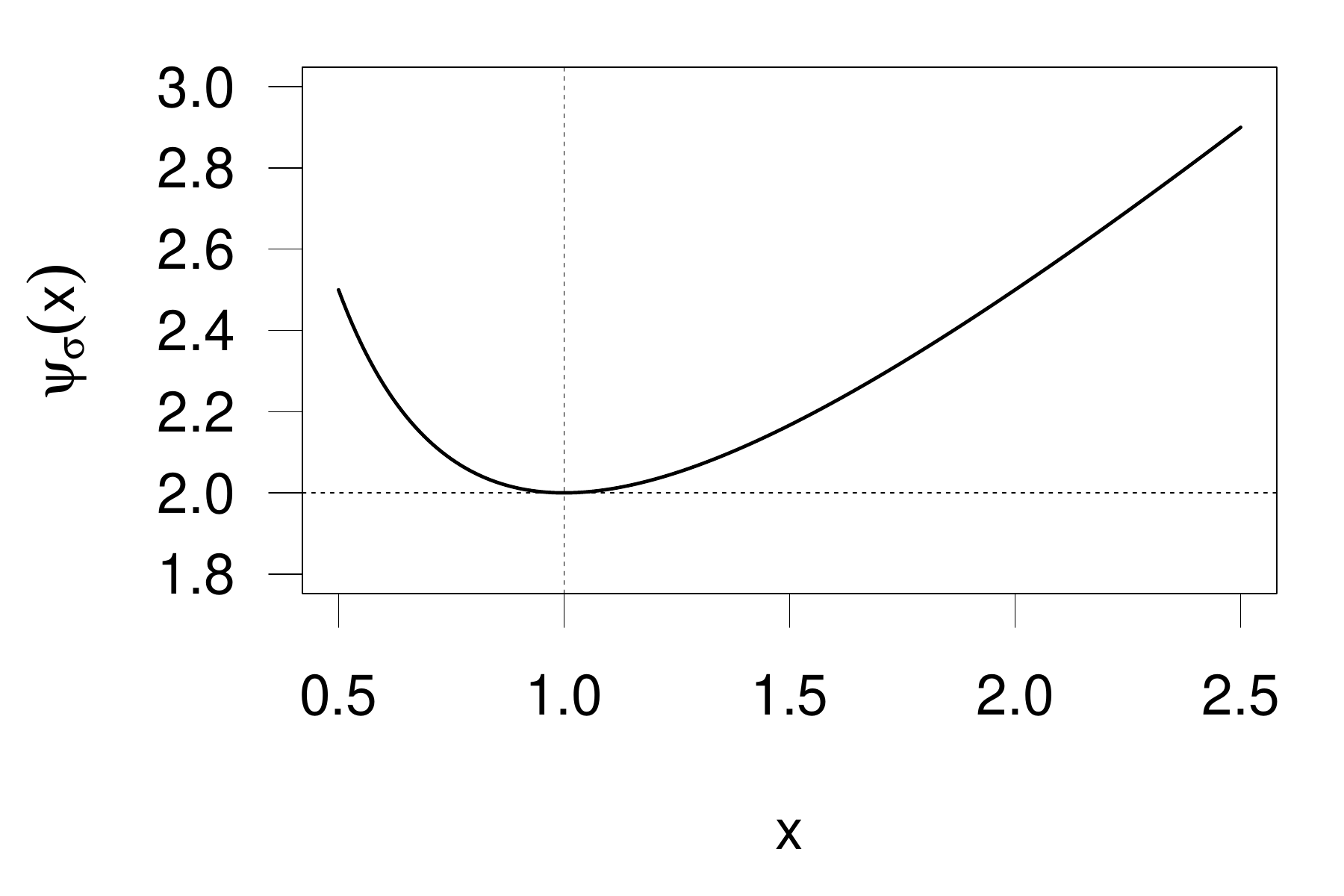} & \includegraphics[scale = 0.25]{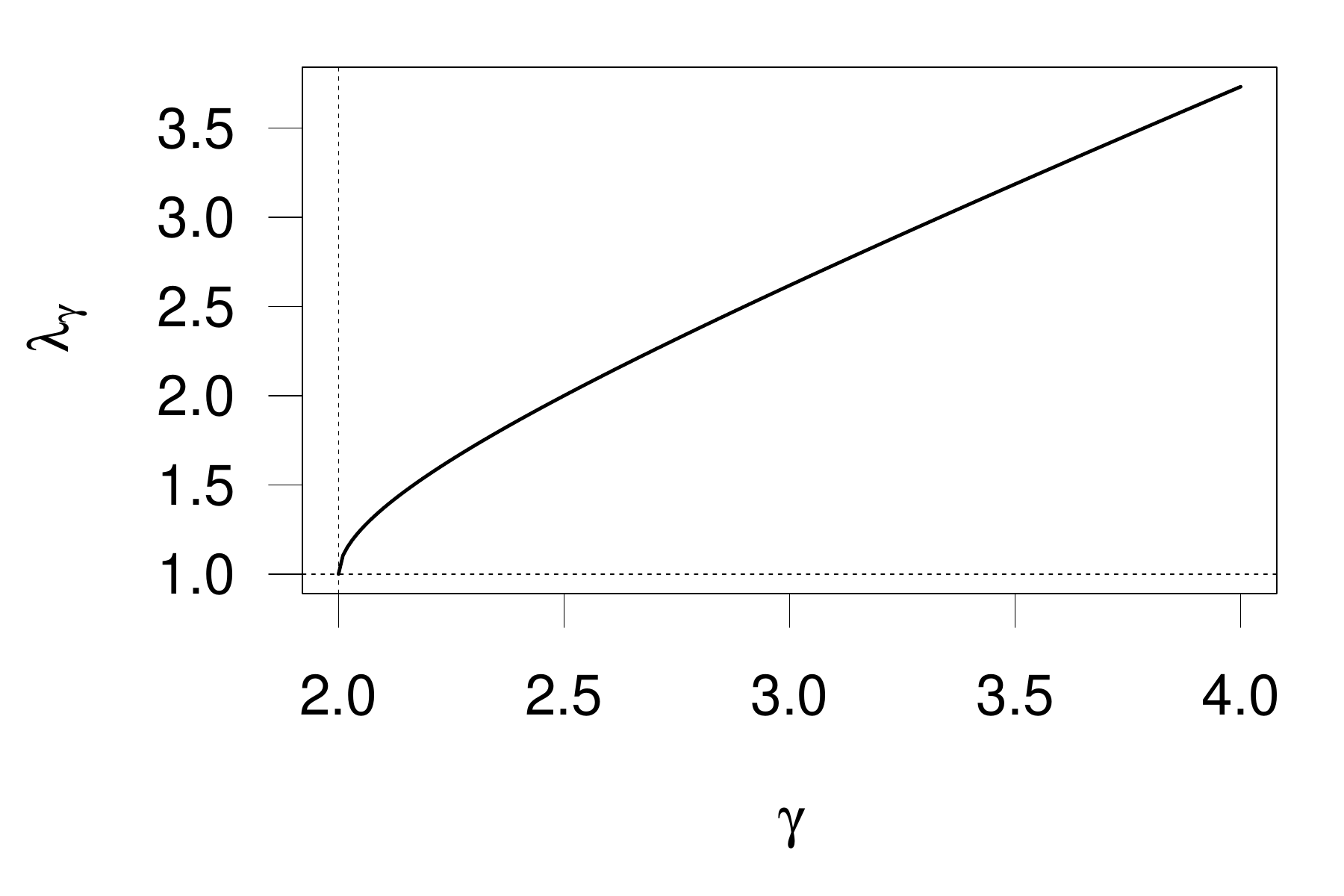} \\
        (a) & (b)
    \end{tabular}
    \caption{(a) Plot of $\psi_{\sigma}(x)$ for $\sigma = 1$. (b) The threshold $\lambda_{\gamma}$ plotted as a function of $\gamma$ for $\sigma = 1$.}
    \label{fig:graphs}
\end{figure}

\subsection{Derivation of \texorpdfstring{$\boldsymbol{\gaic{\gamma}}$}{}}
We now derive exact expressions for the model scores $\gaic{\gamma}_j$ under the spiked GOE model \eqref{eq:spiked-GOE}. Given these scores, we estimate $k$ by minimising $\gaic{\gamma}_j$ over $j \in \{0, 1, \ldots, q - 1\}$:
\begin{align}
    \hat{k}_{\gamma} &:= \arg\min_{0 \le j < q} \gaic{\gamma}_j.
\end{align}

\noindent
\textbf{Known \texorpdfstring{$\sigma$}{}.} Consider first the case of known $\sigma$. The log-likelihood is
\[
    \cL(A) = \log C_N - \frac{N(N + 1)}{4} \log \sigma^2 - \frac{N}{4 \sigma^2} \|X - A\|_F^2.
\]
Let $X = \sum_{i = 1}^N \ell_i v_i v_i^\top$ be the spectral decomposition of $X$. Then the maximum likelihood estimate (MLE) of $A$ is the best rank-$j$ positive semi-definite approximation to $X$ in Frobenius norm, which is given by
\begin{equation}\label{eq:MLE-A}
    \hat{A}_j = \sum_{i = 1}^j \max\{\ell_i, 0\} v_i v_i^\top.
\end{equation}
This is a well-known result (see, for example, Lemma 19 of \cite{clarkson2017low}). Since we will select from a bounded number of candidate models (i.e. $q = O(1)$), for all $1 \le j < q$,
\[
    \liminf_{N \to \infty} \ell_j \ge 2\sigma, \text{ a.s.}
\]
It follows that, almost surely, for $N$ large enough, we have
\begin{equation}\label{eq:MLE-A-simp}
    \hat{A}_j = \sum_{i = 1}^j \ell_i v_i v_i^\top.
\end{equation}
In the case $j = 0$, we take $\hat{A}_0 = 0$, the zero matrix. Hence
\begin{align*}
    \cL(\hat{A}_j) &= \log C_N - \frac{N(N + 1)}{4} \log \sigma^2 - \frac{N}{4\sigma^2} \|X - \hat{A}_j\|_F^2 \\
                    &= \log C_N - \frac{N(N + 1)}{4} \log \sigma^2 - \frac{N}{4\sigma^2} \sum_{i > j} \ell_i^2.
\end{align*}
Therefore, in the case of known $\sigma$, we have
\begin{align} \nonumber
    \gaic{\gamma}_j &= -2 \cL(\hat{A}_j) + \gamma \bigg(N j - \frac{j(j - 1)}{2}\bigg) \\
                    &= - 2\log C_N + \frac{N(N + 1)}{2} \log \sigma^2 + \frac{N}{2 \sigma^2} \sum_{i > j} \ell_i^2 + \gamma \bigg(N j - \frac{j(j - 1)}{2}\bigg). \label{eq:gaic-known-sigma}
\end{align}

\noindent
\textbf{Unknown \texorpdfstring{$\sigma$}{}.} The log-likelihood is
\[
    \cL(A, \sigma^2) = \log C_N - \frac{N(N + 1)}{4} \log \sigma^2 - \frac{N}{4 \sigma^2} \|X - A\|_F^2.
\]
As in the case of known $\sigma$, the maximum likelihood estimate (MLE) of $A$ under model $M_j$ is given by \eqref{eq:MLE-A-simp} (almost surely, for large enough $N$). The MLE of $\sigma^2$ under model $M_j$ is given by
\[
    \hat{\sigma^2_j} = \frac{1}{N + 1} \|X - \hat{A}_j\|_F^2 = \frac{1}{N + 1}\sum_{i > j} \ell_i^2.
\]
Thus
\begin{align*}
    \cL(\hat{A}_j, \hat{\sigma^2_j}) &= \log C_N - \frac{N(N + 1)}{4} \log \hat{\sigma^2_j} - \frac{N}{4\hat{\sigma^2_j}} \|X - \hat{A}_j\|_F^2 \\
                                    &= \log C_N - \frac{N(N + 1)}{4} \log \hat{\sigma^2_j} - \frac{N}{4\hat{\sigma^2_j}} \sum_{i > j} \ell_i^2,
\end{align*}
and
\begin{align}\nonumber
    \gaic{\gamma}_j &= -2 \cL(\hat{A}, \hat{\sigma^2_j}) + \gamma \bigg(1 + N j - \frac{j(j - 1)}{2}\bigg) \\
                    &= - 2\log C_N + \frac{N(N + 1)}{2} \log \hat{\sigma^2_j} + \frac{N}{2 \hat{\sigma^2_j}} \sum_{i > j} \ell_i^2 + \gamma \bigg(1 + N j - \frac{j(j - 1)}{2}\bigg). \label{eq:gaic-uknown-sigma}
\end{align}

\subsection{Results on \texorpdfstring{$\boldsymbol{\hat{k}_{\gamma}}$}{}}
We are now ready to state our main results on the selection properties of $\hat{k}_{\gamma}$. We will distinguish between two notions of consistency.
\begin{definition}[Consistency]\label{def:consistency}
    An estimator $\hat{k}$ of $k$ is called \emph{strongly consistent} if $\hat{k} \convas k$. It is called \emph{weakly consistent} if $\bP(\hat{k} = k) \to 1$.
\end{definition}
\begin{theorem}\label{thm:gamma-aic}
    Suppose that Assumptions~\ref{assmp:q-lambda1-fixed} and \ref{assmp:bbp} hold. Regardless of whether $\sigma$ is known or unknown, we have the following:
    \begin{enumerate}
    \item [(a)] If $\gamma \le 2$, then almost surely, $\liminf_{N \to \infty} \hat{k}_{\gamma} \ge k$.
    
    \item [(b)] If $\gamma > 2$, then almost surely, $\limsup_{N \to \infty} \hat{k}_{\gamma} \le k$.

    \item [(c)] Further, if $\lambda_k > \lambda_{\gamma} := \psi_{\sigma}^{-1}(\sqrt{2\gamma}\sigma)$, then for $\gamma > 2$, almost surely, $\liminf_{N \to \infty} \hat{k}_{\gamma} \ge k$.
    \end{enumerate}
    As a consequence, if $\lambda_k > \lambda_{\gamma}$, then $\hat{k}_{\gamma}$ is strongly consistent for $k$.
\end{theorem}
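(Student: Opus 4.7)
My plan is to study the first differences $\Delta_j := \gaic{\gamma}_j - \gaic{\gamma}_{j-1}$ and determine their asymptotic signs almost surely. Since $\hat{k}_\gamma$ is the argmin of $\gaic{\gamma}_j$ over the bounded set $j \in \{0, 1, \ldots, q\}$, each assertion of the theorem reduces to sign analysis of the $\Delta_j$'s: $\liminf_{N} \hat{k}_\gamma \ge k$ follows once $\Delta_j < 0$ eventually for every $j \le k$, while $\limsup_{N} \hat{k}_\gamma \le k$ follows once $\Delta_j > 0$ eventually for every $j > k$.

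The known-$\sigma$ increment is immediate from \eqref{eq:gaic-known-sigma}:
\[
    \Delta_j = -\frac{N \ell_j^2}{2\sigma^2} + \gamma\bigl(N - (j - 1)\bigr) = N\Bigl(\gamma - \frac{\ell_j^2}{2\sigma^2}\Bigr) + O(1).
\]
For unknown $\sigma$, the key algebraic observation is that $\frac{N}{2\hat{\sigma^2_j}}\sum_{i > j}\ell_i^2 = \frac{N(N+1)}{2}$ is $j$-independent, so by \eqref{eq:gaic-uknown-sigma} the $j$-dependence in $\gaic{\gamma}_j$ collapses onto $\log\hat{\sigma^2_j}$ plus the penalty. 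Using $\hat{\sigma^2_j} - \hat{\sigma^2_{j-1}} = -\ell_j^2/(N+1)$ and $\log(1 - x) = -x + O(x^2)$ with $x = O(1/N)$ a.s., the term $\tfrac{N(N+1)}{2}(\log\hat{\sigma^2_j} - \log\hat{\sigma^2_{j-1}})$ becomes $-\tfrac{N\ell_j^2}{2\hat{\sigma^2_{j-1}}} + O(1)$. The Wigner semicircle law applied to $\tfrac{1}{N}\tr X^2$, together with Proposition~\ref{prop:swm-extr}(a), yields $\hat{\sigma^2_j} \convas \sigma^2$ for every fixed $j \le q$, so in both cases we obtain the common asymptotics
\[
    \Delta_j = N\Bigl(\gamma - \frac{\ell_j^2}{2\sigma^2}\Bigr) + o(N) \quad \text{almost surely.}
\]

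The rest is a direct application of Proposition~\ref{prop:swm-extr}. For $j \le k$, $\ell_j^2/(2\sigma^2) \convas \psi_\sigma(\lambda_j)^2/(2\sigma^2) > 2$ strictly, because $\psi_\sigma$ is strictly increasing on $(\sigma, \infty)$ with minimum $2\sigma$ and $\lambda_j > \sigma$; for $j > k$, $\ell_j^2/(2\sigma^2) \convas 2$. The definition $\lambda_\gamma = \psi_\sigma^{-1}(\sqrt{2\gamma}\sigma)$ is precisely the level at which $\psi_\sigma(\lambda_\gamma)^2/(2\sigma^2) = \gamma$. For (a), $\gamma \le 2$ makes the limit $\gamma - \psi_\sigma(\lambda_j)^2/(2\sigma^2)$ strictly negative for every $j \le k$, so $\Delta_j \convas -\infty$, forcing $\gaic{\gamma}_k < \gaic{\gamma}_{k-1} < \cdots < \gaic{\gamma}_0$ a.s.\ eventually. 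For (b), $\gamma > 2$ makes the limit $\gamma - 2$ strictly positive for every $j > k$, hence $\Delta_j \convas +\infty$. For (c), the assumption $\lambda_k > \lambda_\gamma$ yields $\psi_\sigma(\lambda_j) \ge \psi_\sigma(\lambda_k) > \sqrt{2\gamma}\sigma$ for $j \le k$, so again $\Delta_j \convas -\infty$ there; combined with (b) this pins $\hat{k}_\gamma = k$ a.s.\ eventually, which is strong consistency.

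I expect no genuine obstacle beyond bookkeeping. Two care points are worth flagging. First, although only distributional fluctuations of the non-spiked eigenvalues are available (Proposition~\ref{prop:swm-fluc}), they are not needed: the strict inequalities $\gamma > 2$ in (b)/(c) and $\gamma - \psi_\sigma(\lambda_j)^2/(2\sigma^2) < 0$ in (a)/(c) produce a gap of order $\Theta(N)$ in $\Delta_j$, which dominates the $o(N)$ correction almost surely. Second, the quadratic remainder in the $\log$ expansion, after multiplication by $N(N+1)/2$, is $O(1)$ almost surely --- exactly the size that can be absorbed into the $O(1)$ error in the increment formula, so the unknown-$\sigma$ analysis does not genuinely deviate from the known-$\sigma$ one.
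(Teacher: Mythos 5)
Your proof is correct, and it is organized in a way that differs mildly but cleanly from the paper's. The paper compares each $\gaic{\gamma}_j$ directly to the score of the true model $\gaic{\gamma}_k$, computing $\tfrac{1}{N}(\gaic{\gamma}_j - \gaic{\gamma}_k)$ as a sum over the eigenvalue indices between $j$ and $k$ and then applying Proposition~\ref{prop:swm-extr}; you instead study the first differences $\Delta_j = \gaic{\gamma}_j - \gaic{\gamma}_{j-1}$ and show they are eventually negative for $j \le k$ and eventually positive for $j > k$, which pins the argmin from both sides. The two arguments use identical ingredients (first-order limits of $\ell_j$ and monotonicity of $\psi_\sigma$); yours is a one-index-at-a-time version of theirs. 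Where your write-up genuinely adds value is in the unknown-$\sigma$ case: the paper dispatches it with a one-line remark that $\hat{\sigma^2_j} \convas \sigma^2$ makes the two cases "essentially the same," whereas you carry out the algebra explicitly, observing that $\tfrac{N}{2\hat{\sigma^2_j}}\sum_{i>j}\ell_i^2 = \tfrac{N(N+1)}{2}$ is $j$-free so that the $j$-dependence concentrates in $\log\hat{\sigma^2_j}$, and then expanding the log increment to show the quadratic remainder is $O(1)$ after multiplication by $N(N+1)/2$. That is a useful tightening, as it makes the unknown-$\sigma$ claim verifiable rather than merely plausible. One minor point of rigor worth spelling out when finalizing: when you write $\Delta_j \convas -\infty$ you actually mean the normalized quantity $\Delta_j/N$ converges a.s.\ to a strictly negative constant; the conclusion of an eventual strict sign for $\Delta_j$ is what you use, and that is correct.
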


\begin{remark}
    Under Assumption~\ref{assmp:bbp+rigidity} with $q_N = N$, we have the conclusions of Theorem~1 without the assumption of the boundedness of $q$, the number of candidate models.
\end{remark}

In Figure~\ref{fig:graphs}-(b), we plot the threshold $\lambda_{\gamma}$ as a function of $\gamma$ for $\sigma = 1$. This is strictly bigger than the BBP threshold $\psi_{\sigma}^{-1}(2 \sigma) = \sigma$. Thus, as far as consistent selection is concerned, choosing a penalty factor $\gamma > 2$ is suboptimal.

Note also that Theorem~\ref{thm:gamma-aic} does not say anything about the consistency of AIC (i.e. the case $\gamma = 2$). In fact, a look at its proof (given in Section~\ref{sec:proofs}) reveals that the scores $\aic_j$, $k \le j < q$ become asymptotically of the same order. As the true model $M_k$ is the least complex among the models $M_j, k \le j < q$, there is hope that one may be able to identify the true model by selecting the least complex model close to the minimiser of $\aic_j$. Under additional assumptions, this is indeed possible (see Section~\ref{sec:soft-aic} below).

Since $\aic$ is at the borderline of the dichotomy revealed in parts (a) and (b) of Theorem~\ref{thm:gamma-aic}, a natural question is if we can use a penalty factor $\gamma_N = 2 + \delta_N$, where $\delta_N$ shrinks to $0$ at an appropriate rate and achieve consistency. Using the $N^{-2/3}$ fluctuations of the non-spiked extreme eigenvalues (cf. Assumption~\ref{assmp:fluc-order}), we can establish the following weak consistency result.
 
\begin{theorem}\label{thm:almost-aic-weak-consistency}
    Suppose that Assumptions~\ref{assmp:q-lambda1-fixed},~\ref{assmp:bbp} and \ref{assmp:fluc-order} hold. Let $\delta_N$ be a sequence such that $\delta_N \to 0$ and $\delta_N \gg N^{-2/3}$. Then $\hat{k}_{2 + \delta_N}$ is weakly consistent for $k$, i.e. $\bP(\hat{k}_{2 + \delta_N} = k) \to 1$. This is true regardless of whether $\sigma$ is known or unknown.
\end{theorem}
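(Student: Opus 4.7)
The plan is to prove $\bP(\hat{k}_{\gamma_N} = k) \to 1$ (with $\gamma_N := 2 + \delta_N$) by establishing $\bP(\hat{k}_{\gamma_N} < k) \to 0$ and $\bP(\hat{k}_{\gamma_N} > k) \to 0$ separately. Both reduce to a sign analysis of the incremental differences $\Delta_j := \gaic{\gamma_N}_j - \gaic{\gamma_N}_{j-1}$: since $\gaic{\gamma_N}_j - \gaic{\gamma_N}_k$ telescopes through the $\Delta_i$, it suffices to show that $\Delta_j \to -\infty$ almost surely for each $1 \le j \le k$ and that $\bP(\Delta_j > 0) \to 1$ for each $k < j \le q$.

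In the known-$\sigma$ case, equation \eqref{eq:gaic-known-sigma} gives
\[
    \Delta_j = -\frac{N \ell_j^2}{2\sigma^2} + \gamma_N\bigl(N - (j-1)\bigr).
\]
For $j \le k$, Proposition~\ref{prop:swm-extr}(a) yields $\ell_j^2/(2\sigma^2) \convas \psi_\sigma(\lambda_j)^2/(2\sigma^2) > 2$ while $\gamma_N \to 2$, so $\Delta_j/N$ has a strictly negative a.s.\ limit and $\Delta_j \to -\infty$ a.s. For $j > k$, set $\xi_j := N^{2/3}(\ell_j - 2\sigma)$, which is tight by Proposition~\ref{prop:swm-fluc}. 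Expanding $\ell_j^2 = 4\sigma^2 + 4\sigma N^{-2/3}\xi_j + N^{-4/3}\xi_j^2$ and collecting terms gives
\[
    \Delta_j = N\delta_N - 2\sigma^{-1} N^{1/3} \xi_j + O(1).
\]
The hypothesis $\delta_N \gg N^{-2/3}$ is precisely what makes the penalty gap $N\delta_N$ strictly dominate the $N^{1/3}$-sized Tracy--Widom fluctuation term, so $\bP(\Delta_j > 0) \to 1$; a union bound over the finitely many $j \in \{k+1, \ldots, q\}$ completes the overestimation part.

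The unknown-$\sigma$ case reduces to the previous one. The identity $\hat\sigma^2_j = \tfrac{1}{N+1}\sum_{i > j}\ell_i^2$ makes the cross term $\tfrac{N}{2\hat\sigma^2_j}\sum_{i > j}\ell_i^2 = \tfrac{N(N+1)}{2}$ constant in $j$, collapsing \eqref{eq:gaic-uknown-sigma} to $\gaic{\gamma}_j = \mathrm{const}(N) + \tfrac{N(N+1)}{2}\log\hat\sigma^2_j + \gamma(1 + Nj - \tfrac{j(j-1)}{2})$, and hence
\[
    \Delta_j = \tfrac{N(N+1)}{2}\log\!\bigl(1 - \tfrac{\ell_j^2}{(N+1)\hat\sigma^2_{j-1}}\bigr) + \gamma_N(N - (j-1)).
\]
Since $\hat\sigma^2_{j-1} \convas \sigma^2$ (the semicircle law has second moment $\sigma^2$, and the finitely many edge eigenvalues contribute negligibly to $\tfrac{1}{N+1}\sum_{i > j}\ell_i^2$) and $\ell_j = O(1)$, the log-argument is $O(1/N)$. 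Taylor-expanding $\log(1-x) = -x + O(x^2)$ and multiplying by $\tfrac{N(N+1)}{2}$ recovers the known-$\sigma$ expression with $\sigma^2$ replaced by the consistent estimator $\hat\sigma^2_{j-1}$, up to an $O(1)$ remainder; the same sign analysis then applies. The main technical point is that only tightness of $\xi_j$, not a stronger almost-sure bound, is available from Proposition~\ref{prop:swm-fluc}, which is precisely why the theorem yields weak and not strong consistency.
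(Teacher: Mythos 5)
Your proof is correct and follows essentially the same route as the paper: expand the non-spiked edge eigenvalues as $2\sigma + N^{-2/3}\xi_j$ with tight $\xi_j$, observe that the penalty gap $N\delta_N$ dominates the $N^{1/3}$ Tracy--Widom fluctuation term precisely when $\delta_N \gg N^{-2/3}$, and handle $j<k$ by the same first-order argument as Theorem~\ref{thm:gamma-aic}(a). Your reorganization into consecutive increments $\Delta_j$ followed by a union bound and telescoping is logically the same decomposition the paper carries out directly on $\gaic{\gamma}_j - \gaic{\gamma}_k$. The one genuine addition is your explicit unknown-$\sigma$ reduction via the cancellation $\tfrac{N}{2\hat\sigma^2_j}\sum_{i>j}\ell_i^2 = \tfrac{N(N+1)}{2}$ and the Taylor expansion of $\log\hat\sigma^2_j$, which the paper leaves at the level of ``essentially the same''; note, though, that replacing $\sigma^2$ by $\hat\sigma^2_{j-1}$ in $-\tfrac{N\ell_j^2}{2\hat\sigma^2_{j-1}}$ introduces a term $2N\bigl(1 - \sigma^2/\hat\sigma^2_{j-1}\bigr)$ that is only dominated by $N\delta_N$ because $\hat\sigma^2_{j-1} - \sigma^2 = O_P(N^{-1})$, a rate that does not follow from the bare almost-sure convergence \eqref{eq:sigma2hat-conv-sigma2} you cite and should be stated.
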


\begin{remark}
    A natural question left unanswered here is if there is a choice of $\delta_N$ for which $\hat{k}_{2 + \delta_N}$ is strongly consistent.
\end{remark}

\subsection{The scree plot estimator}
In spiked models such as PCA, one typically first looks at a plot of the eigenvalues for a kink therein. Such plots are commonly referred to as scree plots. In the context of our spiked model, one can construct such an estimator as follows (we will refer to this as the scree plot estimator):
\[
    \hat{k}_{\scree} = \sup\bigg\{0 \le j < q : \frac{\ell_j}{2\hat{\sigma}} > 1\bigg\},
\]
where $\hat{\sigma}$ is a strongly consistent estimate of $\sigma$. This will serve as a benchmark estimator.

\begin{proposition}\label{prop:scree}
   Suppose that Assumptions~\ref{assmp:q-lambda1-fixed} and \ref{assmp:bbp} hold. Then $\hat{k}_{\scree}$ is strongly consistent for $k$.
\end{proposition}

\subsection{A soft-minimisation approach}\label{sec:soft-aic}
Although from Theorem~\ref{thm:gamma-aic}, we cannot guarantee strong consistency of AIC, as discussed earlier, there is some hope of recovering the true model by selecting the least complex model close to the minimiser of AIC. We now make this precise.

For a threshold $\hat{\xi} > 0$, consider the following estimator of $k$ which we dub $\saic$:
\begin{equation}\label{eq:soft-aic}
    \hat{k}_{2, \,\soft} := \min\bigg\{0 \le j < q : |\aic_j - \min_{0 \le j' < q} \aic_{j'}| < \frac{\hat{\xi}}{3}\bigg\}.
\end{equation}
The threshold $\hat{\xi}$ has to be chosen carefully so that in the minimisation above models $M_j$ with $j < k$ are automatically discarded. To that end, define for $j \le k$,
\[
    \xi_j := \frac{1}{2\sigma^2}(\psi_{\sigma}(\lambda_j)^2 - 4\sigma^2).
\]
In effect, our threshold $\hat{\xi}$ should be smaller than $\xi_k$ in an appropriate sense.

\begin{theorem}\label{thm:soft-aic}
    Suppose that Assumptions~\ref{assmp:q-lambda1-fixed} and \ref{assmp:bbp} hold and we can construct a threshold $\hat{\xi}$ such that almost surely,
    \begin{equation}\label{eq:soft-aic-thres-prop}
        0 < \liminf_{N \to \infty} \hat{\xi} \le \limsup_{N \to \infty} \hat{\xi} \le \xi_k.
    \end{equation}
    Then $\hat{k}_{2, \,\soft} \convas k$.
\end{theorem}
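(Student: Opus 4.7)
My plan is to show that, almost surely for all sufficiently large $N$, the candidate set
\[
\mathcal{C}_N \;:=\; \{j \in \{0, 1, \ldots, q\} \st \aic_j - \textstyle\min_{j'} \aic_{j'} < \hat{\xi}/3\}
\]
has smallest element $k$. This decomposes into two tasks: (a) exclude every $j < k$ from $\mathcal{C}_N$, and (b) include $k$ in $\mathcal{C}_N$, both eventually a.s. Granting these, $\hat{k}_{2,\soft} = k$ for all large $N$ almost surely, which is the desired $\hat{k}_{2,\soft} \convas k$.

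For (a), I start from $\aic_j - \min_{j'}\aic_{j'} \ge \aic_j - \aic_k$ and control the right-hand side. From the closed form \eqref{eq:gaic-known-sigma} (or \eqref{eq:gaic-uknown-sigma} when $\sigma$ is unknown), the $j$-independent terms cancel, and applying Proposition~\ref{prop:swm-extr}(a) to the spiked eigenvalues $\ell_{j+1}, \ldots, \ell_k$ gives
\[
    \tfrac{1}{N}(\aic_j - \aic_k) \;\convas\; \sum_{i=j+1}^{k} \tfrac{\psi_\sigma(\lambda_i)^2 - 4\sigma^2}{2\sigma^2} \;\ge\; \xi_k \;>\; 0,
\]
so $\aic_j - \aic_k$ grows linearly in $N$. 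Because $\hat\xi/3$ is asymptotically bounded above by $\xi_k/3$, this linear growth eventually dwarfs $\hat\xi/3$, ruling out $j < k$.

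For (b), the exclusion just shown means $\min_{j'}\aic_{j'} = \min_{j' \ge k}\aic_{j'}$ eventually, so it suffices to verify $\aic_k - \aic_{j'} < \hat\xi/3$ for each $j' \in \{k+1, \ldots, q\}$, eventually a.s. The closed form gives
\[
    \aic_k - \aic_{j'} \;=\; \tfrac{N}{2\sigma^2}\sum_{i=k+1}^{j'}(\ell_i^2 - 4\sigma^2) \;+\; \bigl(j'(j'-1) - k(k-1)\bigr),
\]
and Proposition~\ref{prop:swm-extr}(b) gives $\ell_i^2 - 4\sigma^2 \convas 0$ for each fixed $i > k$.

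The step I expect to be the main obstacle is precisely this last one: the prefactor $N$ multiplies a sum of terms only qualitatively known to vanish. First-order convergence alone gives no rate, and the Tracy--Widom scaling of Proposition~\ref{prop:swm-fluc} suggests $N(\ell_i - 2\sigma)$ is of order $N^{1/3}$ in distribution. To close the argument almost surely, one must leverage an a.s.\ edge-rigidity bound --- controlling $\sum_{i > k}(\ell_i - 2\sigma)^+$ at a rate $o(\hat\xi/N)$ --- together with the hypothesis $\liminf\hat\xi > 0$, so that $\aic_k - \aic_{j'} < \hat\xi/3$ eventually a.s. Combined with (a), this gives $\min \mathcal{C}_N = k$ for all large $N$ almost surely, proving $\hat{k}_{2,\soft} \convas k$.
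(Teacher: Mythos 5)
Your decomposition matches the paper's: rule out every $j < k$ from the candidate set, then verify that $k$ itself satisfies the threshold constraint. Step (a) is correct and follows the same route as the paper, using Proposition~\ref{prop:swm-extr}(a) to get $\frac{1}{N}(\aic_j - \aic_k) \convas \sum_{i=j+1}^k \xi_i \ge \xi_k$ together with $\limsup_N \hat\xi \le \xi_k$.

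The difficulty you flag in step (b), however, is a genuine obstacle to the proof \emph{as you have set it up}, and the fix you sketch cannot work. You invoke an almost-sure edge-rigidity bound to control $\sum_{i > k}(\ell_i^2 - 4\sigma^2)$ at rate $o(\hat\xi/N)$, but edge rigidity gives at best $\ell_i - 2\sigma = O(N^{-2/3 + \epsilon})$ almost surely, hence $\aic_k - \aic_{j'} = O(N^{1/3+\epsilon})$, which diverges; indeed your own observation that the Tracy--Widom scaling puts $N^{2/3}(\ell_i - 2\sigma)$ at a nondegenerate limit already shows $|\aic_k - \min_{j'}\aic_{j'}|$ is of exact order $N^{1/3}$ in probability, so the membership condition $|\aic_k - \min_{j'}\aic_{j'}| < \hat\xi/3$ with a bounded $\hat\xi$ is \emph{not} eventually satisfied by $j = k$. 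No almost-sure rate on the edge eigenvalues can yield $\sum_{i>k}(\ell_i^2 - 4\sigma^2) = o(1/N)$.

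The paper's proof sidesteps this because the comparison it actually carries out is between $\frac{1}{N}\lvert \aic_j - \min_{j'}\aic_{j'}\rvert$ and $\hat\xi/3$ (see the displayed inequalities closing that proof); the printed definition \eqref{eq:soft-aic} is missing this $\frac{1}{N}$ scaling, which you took at face value. Once the criterion is rescaled, no rate information is needed at all: $\frac{1}{N}(\aic_j - \aic_k) \convas 0$ for $j > k$ follows from the first-order convergence $\ell_i \convas 2\sigma$ of Proposition~\ref{prop:swm-extr}(b), and this eventually drops below $\frac{1}{3}\liminf_N \hat\xi > 0$. Your step (a) survives the rescaling verbatim, and step (b) then becomes routine; the appeal to rigidity is unnecessary.
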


\noindent
\textbf{Construction of a suitable threshold \texorpdfstring{$\hat{\xi}$}{}.}
How do we construct a threshold $\hat{\xi}$ such that \eqref{eq:soft-aic-thres-prop} holds? For a strongly consistent estimator $\hat{\sigma^2}$ of $\sigma^2$, set
\[
    \hat{\xi}_j := \frac{1}{2\hat{\sigma^2}}(\ell_j^2 - 4\hat{\sigma^2}) \convas \begin{cases}
        \xi_j & \text{ if } j \le k, \\
        0 & \text{ if } k < j < q.
\end{cases}
\]
Since $\lambda_1$ is bounded, then so is $\frac{\xi_1}{\xi_k}$. Assume that we \emph{know} an a priori upper bound $B$ on $\frac{\xi_1}{\xi_k}$, i.e. $\xi_1 \le B \xi_k$. For instance, in the equal-spikes case (i.e. $\lambda_1 = \cdots = \lambda_k$), we can take $B = 1$. Then, since $q$ is bounded, we may take
\begin{equation}\label{eq:xi-hat}
    \hat{\xi} = \frac{1}{qB} \sum_{j = 1}^q \hat{\xi}_j \convas \xi:= \frac{1}{qB} \sum_{j = 1}^k \xi_j.
\end{equation}
Clearly,
\[
     0 < \xi \le \frac{kB\xi_k}{qB} = \frac{k}{q} \xi_k \le \xi_k,
\]
so that \eqref{eq:soft-aic-thres-prop} holds for $\hat{\xi}$.

\noindent
\textbf{Data-driven soft minimisation in two steps.}\label{sec:two-step-soft-min-AIC}
Since it may be unrealistic to assume knowledge of a value of the upper bound $B$, we may adopt a two-step procedure. First we estimate $B$ in a data-driven way as follows: we determine the value of $k$ using the scree plot estimator $\hat{k}_{\scree}$. After that, we calculate the estimate $\hat{ \xi}_{\hat{k}_{\scree}}$ and $\hat{\xi}_1$ of $\xi_k$ and $\xi_1$, respectively. Next, we take the ratio of these two quantities and use the resulting value as an estimate for $B$. Once we have the estimate for $B$, we can employ the soft minimization procedure.

\begin{theorem}\label{thm:two-step}
    Suppose that Assumptions~\ref{assmp:q-lambda1-fixed} and \ref{assmp:bbp} hold. Let $\hat{k}_{\scree}$ denote the scree plot estimator. Set
    \[
        \hat{B} = \frac{\hat{\xi}_1}{\hat{\xi}_{\hat{k}_{\scree}}} \quad \text{and} \quad
        \hat{\xi}_{\scree} = \frac{1}{q\hat{B}} \sum_{j = 1}^q \hat{\xi}_j.
    \]
    Then $\hat{k}_{2, \soft}$ constructed with the threshold $\hat{\xi}_{\scree}$ is strongly consistent for $k$.
\end{theorem}
\begin{remark}
    Instead of the scree plot estimator, one may use any other strongly consistent estimator of $k$. Note also that in the construction of $\hat{\xi}_{\scree}$, one may just average over the $\hat{k}_{\scree}$ many top eigenvalues instead of the top $q$. This is useful because this construction would then work when $q$ is allowed to grow with $N$.
\end{remark}

\subsection{Estimating \texorpdfstring{$\boldsymbol{\sigma}$}{}.}
Instead of using the $\gaic{\gamma}$ scores \eqref{eq:gaic-uknown-sigma} for unknown $\sigma$, one may also use the scores \eqref{eq:gaic-known-sigma} for known $\sigma$ by plugging in some strongly consistent estimate of $\sigma$. We also need such an estimator for the proposed construction \eqref{eq:xi-hat} of the threshold $\hat{\xi}$ in $\saic$ and for the scree plot estimator $\hat{k}_{\scree}$. Technically, we could use
\[
    \hat{\sigma^2_0} = \frac{1}{N + 1}\sum_{j = 1}^N \ell_j^2 = \frac{1}{N + 1} \|X\|_F^2,
\]
which converges almost surely to $\sigma^2$. However, if some of the top spiked eigenvalues are large, then for small to moderate $N$, $\hat{\sigma^2_0}$ tends to overestimate $\sigma^2$, leading to poor performance (see Section~\ref{sec:simu} for empirical demonstrations of this phenomenon). Instead, we can throw away some of the extreme eigenvalues and adjust the estimate accordingly. For $\alpha \in (0, 1)$, let $q_{\alpha}$ denote the top $\alpha$-th quantile of the standard semi-circle law, i.e.
\[
    \alpha = \int_{q_{\alpha}}^2 \varrho_{\semicirc}(x; 1) \, dx.
\]
Then we may take the following trimmed estimator of $\sigma^{2}$:
\[
    \hat{\sigma}^2_{\alpha} := \frac{\frac{1}{N}\sum_{j : \ell_{\lfloor \alpha N \rfloor} \le \ell_j \le \ell_{\lfloor (1 - \alpha) N \rfloor}} \ell_j^2} {\int_{-q_{\alpha}}^{q_{\alpha}} x^2 \varrho_{\semicirc}(x; 1) \, dx}.
\]
This also converges almost surely to $\sigma^2$.

\section{Estimating the number of communities under a Stochastic Block Model}\label{sec:SBM}
While we presented and analysed our criteria for the spiked Wigner model, it can also be used to estimate the number of communities under a Stochastic Block Model (SBM). The adjacency matrix of a random graph sampled from an SBM may be thought of as a low-rank perturbation to a Wigner matrix with a variance profile. Rigorous BBP-type phase transition results for such matrices are not available in full generality. A recent preprint \cite{han2023spectral} works out a BBP-type result for stochastic block models with equal-sized communities. In this section, we will write down a criterion based on \eqref{eq:gaic-known-sigma} and analyse its behaviour under the above-mentioned model using the results of \cite{han2023spectral}.

In our SBM, $n$ nodes are partitioned into $k$ equal-sized communities. Nodes from the same community form connections with probability $p_{\within}$ and nodes from different communities do so with probability $p_{\across}$, with the connections being formed independent of one another. We denote this model as $\sbm(N, k; p_{\within}, p_{\across})$. Let $A$ denote the adjacency matrix of a graph sampled from this model. Note that under this model, $p_{\avg} = \frac{p_{\within} + (k - 1) p_{\across}}{k}$ is the average connection probability.

It may then be checked that $\bE[A]$ is of rank $k$ and its non-zero eigenvalues are $\lambda_1 = N p_{\avg}$ with multiplicity $1$ and $\lambda_2 = \frac{N(p_{\within} - p_{\across})}{k}$ with multiplicity $k - 1$. Moreover, one may write
\[
    \bE[A] = p_{\avg} J_N + \frac{N(p_{\within} - p_{\across})}{k} \sum_{i = 2}^k w_i w_i^\top,
\]
where $J_N$ denotes the $N \times N$ matrix of all $1$'s, and $w_2, \ldots, w_k$ are $(k - 1)$ orthonormal eigenvectors corresponding to $\lambda_2$.

Let
\[
    \varsigma := \sqrt{N \cdot \frac{p_{\within} (1 - p_{\within}) + (k - 1) p_{\across} (1 - p_{\across})}{k}}; \qquad \theta_N := \frac{N (p_{\within} - p_{\across})}{\varsigma k}.
\]
Consider the matrix
\[
    M := \frac{(A - p_{\avg} J_N)}{\varsigma}.
\]
Notice that
\[
    \bE[M] = \frac{\bE[A] - p_{\avg} J_N}{\varsigma} = \frac{\lambda_2}{\varsigma} \sum_{j = 2}^{k - 1} w_j w_j^\top = \theta_N \sum_{j = 2}^{k - 1} w_j w_j^\top.
\]
Note also that
\[
    \sum_{j} \Var(M_{ij}) = \frac{1}{\varsigma^2 N} \sum_{j} \Var(A_{ij}) = \frac{1}{\varsigma^2 N} \bigg(\frac{N}{k} p_{\within}(1 - p_{\within}) + \frac{N (k - 1)}{k} p_{\across}(1 - p_{\across})\bigg) = \frac{1}{N}.
\]
Recall that a \emph{generalised Wigner matrix} is a random symmetric matrix $W$ whose entries are zero mean independent random variables such that $\sum_{j} \Var(W_{ij}) = 1$ for each $i$. Therefore, we may regard the matrix $M$ as a rank-$(k - 1)$ perturbation of a generalised Wigner matrix $W = \sqrt{N}(M - \bE[M])$:
\[
    M = \theta_N \sum_{j = 1}^N w_i w_i^\top + \frac{1}{\sqrt{N}} W.
\]
This is exactly of the form~\eqref{eq:spiked-Wigner}, with $\sigma^2 = 1$. We now recall the following result from \cite{han2023spectral}, restated in our notation.
\begin{proposition}[Theorem 2.1 and Corollary~3.2 of \cite{han2023spectral}]\label{prop:BBP_M}
    Suppose that $p_{\avg} \gg N^{-3/4}$ and $\theta_N \to \theta \in (0, \infty]$. Then
    \begin{enumerate}
        \item[(a)] If $\theta < 1$, then $\lambda_i(M) \convas 2$ for any fixed $i \ge 1$.
        \item[(b)] If $\theta > 1$, then $\lambda_i(M) \convas 2$ for any fixed $i \ge k$ and
            \[
                \lambda_i - (\theta_N + \frac{1}{\theta_N}) \convas 0 \quad 1 \le i \le k - 1.
            \]
    \end{enumerate}
\end{proposition}
This gives a BBP-type phase transition result for $M$ and thus we are tempted to use the model selection criteria developed earlier on $M$. However, we do not know $\varsigma$ or $p_{\avg}$. We can estimate the latter using
\[
    \hat{p}_{\avg} := \frac{1}{N^2} \sum_{i, j} A_{ij}.
\]
Then to estimate $\varsigma$, we will use
\[
    \hat{\varsigma} = \sqrt{N \hat{p}_{\avg} (1 - \hat{p}_{\avg})}.
\]
With these estimators in hand, we shall consider the matrix
\[
    \hat{M} := \frac{(A - \hat{p}_{\avg} J)}{\hat{\varsigma}}.
\]
The following proposition compares the matrices $M$ and $\hat{M}$ in the operator norm.
\begin{proposition}\label{prop:M-vs-Mhat}
    Suppose that $N^{-2\beta} \ll p_{\avg} \ll 1 - \delta_0$ with $\beta \in (0, 1/2)$ and some fixed $\delta_0 \in (0, 1)$. Assume that (i) $\theta_N \ll N^{1/2}$ and (ii) $\limsup_{N \to \infty} N^{-1/2}\|M - \bE M\|_{\op} < \infty$ a.s. Then
\[
    |\frac{\hat{\varsigma}}{\varsigma} - 1| \convas 0 \qquad \text{and} \qquad \theta_N^{-1} \|\hat{M} - M\|_{\op} \convas 0.
\]
\end{proposition}
A few remarks are in order. First, the assumption $\theta_N \ll N^{1/2}$ is needed to show the closeness of $\hat{\varsigma}$ and $\varsigma$ (and a fortiori that of $\hat{M}$ and $M$). Second, the assumption that $\limsup_{N \to \infty} N^{-1/2} \|M - \bE M\|_{\op} < \infty$ is a rather mild one. In fact, since $M - \bE M$ is a generalised Wigner matrix, we expect that under mild conditions,
\begin{equation}\label{eq:op-norm-Mhat-minus-M}
    \|M - \bE M\|_{\op} \convas 2.
\end{equation}
Anyway, we will be operating under the assumption $p_{\avg} \gg N^{-3/4}$ of Proposition~\ref{prop:BBP_M}, under which one may readily show that $\|M - \bE M\|_{\op} = O(1)$ with (polynomially) high probability (see, e.g., Theorem~5.2 of \cite{lei2015consistency}).

From Weyl's inequality and Proposition~\ref{prop:BBP_M} it follows that
\begin{proposition}\label{prop:BBP_Mhat}
    Suppose $p_{\avg} \gg N^{-3/4}$. Suppose that $\theta_N \to \theta \in (0, \infty]$ and $\theta_N \ll N^{1/2}$. Then
    \begin{enumerate}
        \item[(a)] If $\theta \le 1$, then $\ell_i(\hat{M}) \convas 2$ for any fixed $i \ge 1$.
        \item[(b)] If $1 < \theta < \infty $, then $\ell_i(\hat{M}) \convas 2$ for any fixed $i \ge k$ and
            \[
                \ell_i(\hat{M}) \convas (\theta + \frac{1}{\theta}) \quad \text{for } 1 \le i \le k - 1.
            \]
        \item[(c)] If $1 \ll \theta_N \ll N^{1/2}$, then almost surely, for any $1 \le i \le k - 1$,
            \[
                \ell_i(\hat{M}) - (\theta_N + \frac{1}{\theta_N}) = o(\theta_N).
            \]
            Further, for any fixed $i \ge k$, $\ell_i(\hat{M}) \convas 2$.
    \end{enumerate}
\end{proposition}
This motivates us to define the following AIC-like criterion for selecting the number of blocks in SBM:
\[
    H_{\gamma}(j) = \frac{1}{2} \sum_{i > j} \ell_i(\hat{M})^2 - \gamma(N - j)\bigg(1 - \frac{N + j - 1}{2N}\bigg).
\]
(This is nothing but $\frac{1}{N}(\gaic{\gamma}_j - \overline{C}_N)$ for a quantity $\overline{C}_N$ depending only on $N$, with $\gaic{\gamma}_j$ as in \eqref{eq:gaic-known-sigma} and $\sigma^2 = 1$). We propose to estimate $k$ by
\begin{equation}\label{eq:aic_comm}
    \hat{k}_{\gamma}^{(\comm)} = 1 + \arg\min_{0 \le j < q} H_{\gamma}(j).
\end{equation}
(We may also define the scree plot estimator and $\saic$ or its two-step data-driven version analogously using the sample eigenvalues of $\hat{M}$.) The following is the analogue of Theorem~\ref{thm:gamma-aic} for $\hat{k}_{\gamma}^{(\comm)}$.
\begin{theorem}\label{thm:gamma-aic-comm}
    Suppose that $A$ is the adjacency matrix of a graph sampled from $\mathrm{SBM}(N, k; p_{\within}, p_{\across})$, where $p_{\avg} \gg N^{-3/4}$,  $\theta_N \to \theta \in (0, \infty]$ and $\theta_N \ll N^{1/2}$. Then the following results hold for the model selection criterion \eqref{eq:aic_comm}.
    \begin{enumerate}
        \item [(a)] If $\gamma \le 2$, then almost surely, $\liminf_{N \to \infty} \hat{k}_{\gamma}^{(\comm)} \ge k$.
    
        \item [(b)] If $\gamma > 2$, then almost surely, $\limsup_{N \to \infty} \hat{k}_{\gamma}^{(\comm)} \le k$.

    \item [(c)] Further, if $\theta > \psi_{1}^{-1}(\sqrt{2\gamma})$, then for $\gamma > 2$, almost surely, $\liminf_{N \to \infty} \hat{k}_{\gamma} \ge k$.
    \end{enumerate}
    As a consequence, if $\theta > \psi_{1}^{-1}(\sqrt{2\gamma})$, then $\hat{k}_{\gamma}^{(\comm)}$ is strongly consistent for $k$.
\end{theorem}
We also have the obvious analogues of Theorems~\ref{thm:soft-aic} and~\ref{thm:two-step} in this setting. For brevity, we omit these.

\section{Empirical results}\label{sec:simu}
In this section\footnote{The results of this section can be replicated using the R code available at \protect{\url{https://gitlab.com/soumendu041/swm-aic}}.}, we report simulation experiments comparing $\aic$, $\gaic{2 + \delta_N}$, $\gaic{\gamma}$, $\saic$, the scree plot estimator and $\saic^{\mathrm{(ad)}}$, the data-driven version of $\saic$ with the scree plot estimator providing a pilot estimate of $k$. We report two performance metrics for each estimator $\hat{k}$ based on Monte Carlo replications: (i) average dimensionality of the selected model along with the corresponding standard error estimates and an estimate of the \emph{probability of correct selection (PCS)}, i.e. $\bP(\hat{k} = k)$.

For the AIC-type estimators, we use the scores \eqref{eq:gaic-known-sigma} for known $\sigma$ together with various estimates of $\sigma$, and also the scores \eqref{eq:gaic-uknown-sigma} for unknown $\sigma$. For the scree plot estimator, we use estimates of $\sigma$. Below S-1 refers to the situation, where we use the \emph{oracle} value of $\sigma$ in the estimators. S-2 (resp. S-3) refers to the situation where we use $\hat{\sigma^2_0}$ (resp. $\hat{\sigma^2_{\alpha}}$) as an estimate of $\sigma$. Finally, S-4 refers to the situation (applicable to the AIC-type estimators only) where the scores corresponding to unknown $\sigma$ are used.

In the experiments below, we take $\sigma^2 = 1$, $N = 1000$, $\gamma = 2.15$. The spiked matrix $A$ is taken to be a diagonal matrix with specified eigenvalues $\lambda_1, \ldots, \lambda_k, 0, \ldots, 0$. We use $B = \frac{\xi_1}{\xi_k} + 1$ for $\saic$, $\delta_N = \frac{0.1}{\sqrt{N}}$ and take $\alpha = 0.1$ in $\hat{\sigma^2_{\alpha}}$. For $\saic^{(\mathrm{ad})}$, we use $\hat{B} = \frac{\hat{\xi}_1}{\hat{\xi}_{\hat{k}_{\scree}}} + 1$, with $\hat{\sigma^2_{\alpha}}$ used for computing the $\hat{\xi}_j$'s. We search over $q = 20$ candidate models. The performance metrics are all based on $100$ Monte Carlo replications.

We consider three different noise profiles: (i) GOE; (ii) Wigner with Rademacher entries (i.e. random signs); and (iii) Schur-Hadamard product of independent symmetric Toeplitz and Hankel random matrices each with i.i.d. $\cN(0, 1)$ entries, with the resulting matrix being scaled by $\sqrt{N}$. In Tables~\ref{table:1} and \ref{table:2}, these profiles are denoted by ``GOE'', ``Rad.'' and ``$T\odot H$'', respectively. Note that profile (iii) has dependent (but uncorrelated) entries. The empirical eigenvalue distribution of this profile was studied in \cite{bose2014bulk, mukherjee2022convergence} where it was shown to converge to the semi-circle law. We expect that the results on the behaviour of edge eigenvalues (stated under the spiked Wigner model) would also hold for this profile (as an instantiation of universality). Therefore it is natural to anticipate that the selection properties of the various estimators considered would be valid under this noise profile as well. This is indeed confirmed in our experiments.

In our first experiment reported in Table~\ref{table:1}, we have $k = 4$ spiked eigenvalues, with $\lambda_4 = 1.1$, just above the BBP threshold $1$. Since $\lambda_{\gamma} = 1.31$, $\gaic{\gamma}$ always underestimates $k$. $\aic$, $\gaic{2 + \delta_N}$, and $\saic$ all perform much better than the simple scree plot estimator. Further, all of these three AIC-type estimators perform the best in S-4, where the scores for unknown $\sigma^2$ are used. Also, note that using $\hat{\sigma^2_0}$ as an estimator of $\sigma$ results in disastrous performance, likely due to the effect of the relatively large top eigenvalue $\lambda_1 = 5$. The trimmed estimator $\hat{\sigma^2_{\alpha}}$ fares much better in comparison. Somewhat surprisingly, under the $T\odot H$ noise profile, all the estimators suffer from degraded performance in S-1.

\begin{table}[!t]
    \centering
    \setlength{\tabcolsep}{0.22em}
    \caption{$k = 4$, $(\lambda_1, \lambda_2, \lambda_3, \lambda_4)^\top = (5, 1.5, 1.2, 1.1)^\top$, $\sigma^2 = 1$. $N = 1000$, $\delta_N = \frac{0.1}{\sqrt{N}}$, $\lambda_{2 + \delta_N} = 1.04$, $\gamma = 2.15$, $\lambda_{\gamma} = 1.31$. S-1: oracle $\sigma^2$; S-2: $\hat{\sigma^2_0}$; S-3: $\hat{\sigma^2_{\alpha}}$ with $\alpha = 0.1$; S-4: unknown $\sigma^2$.}
    \label{table:1}
    \fontsize{6}{8}\selectfont
    \begin{tabular}{l|l*{4}{c}|*{4}{c}|*{4}{c}|*{4}{c}|*{3}{c}|*{4}{c}}
        \toprule
        \multicolumn{2}{c}{} & \multicolumn{4}{c}{$\boldsymbol{\aic}$} & \multicolumn{4}{c}{$\boldsymbol{\gaic{2 + \delta_N}}$} & \multicolumn{4}{c}{$\boldsymbol{\saic}$} & \multicolumn{4}{c}{$\boldsymbol{\gaic{\gamma}}$} & \multicolumn{3}{c}{$\boldsymbol{\scree}$} & \multicolumn{4}{c}{$\boldsymbol{\saic^{(\mathrm{ad})}}$} \\
        \midrule
        \multicolumn{2}{c}{} & S-1 & S-2 & S-3 & S-4 & S-1 & S-2 & S-3 & S-4 & S-1 & S-2 & S-3 & S-4 & S-1 & S-2 & S-3 & S-4 & S-1 & S-2 & S-3 & S-1 & S-2 & S-3 & S-4 \\
        \midrule
        \multirow{3}{*}{\rotatebox{90}{\bfseries{GOE}}}
        & \bfseries{mean} & 3.44 & 2.68 & 3.23 & 3.78 & 3.40 & 2.67 & 3.21 & 3.67 & 3.44 & 2.68 & 3.22 & 3.78 & 2.11 & 1.99 & 2.04 & 2.13 & 3.36 & 2.66 & 3.18 & 3.25 & 2.61 & 3.09 & 3.50 \\
        & \bfseries{sd}   & 0.54 & 0.57 & 0.51 & 0.56 & 0.53 & 0.57 & 0.54 & 0.60 & 0.54 & 0.57 & 0.52 & 0.56 & 0.31 & 0.17 & 0.20 & 0.34 & 0.52 & 0.55 & 0.54 & 0.56 & 0.53 & 0.49 & 0.58 \\
        & \bfseries{PCS}  & 0.46 & 0.05 & 0.27 & 0.64 & 0.42 & 0.05 & 0.27 & 0.56 & 0.46 & 0.05 & 0.27 & 0.64 & 0.00 & 0.00 & 0.00 & 0.00 & 0.38 & 0.04 & 0.25 & 0.31 & 0.02 & 0.17 & 0.48 \\
        \midrule
        \multirow{3}{*}{\rotatebox{90}{\bfseries{Rad.}}}
        & \bfseries{mean} & 3.36 & 2.62 & 3.12 & 3.68 & 3.29 & 2.57 & 3.07 & 3.65 & 3.34 & 2.61 & 3.12 & 3.68 & 2.01 & 2 & 2.01 & 2.04 & 3.24 & 2.54 & 3.02 & 3.16 & 2.46 & 2.93 & 3.51 \\
        & \bfseries{sd}   & 0.50 & 0.49 & 0.46 & 0.63 & 0.52 & 0.50 & 0.43 & 0.61 & 0.52 & 0.49 & 0.46 & 0.63 & 0.10 & 0 & 0.10 & 0.20 & 0.49 & 0.50 & 0.43 & 0.47 & 0.50 & 0.43 & 0.58 \\
        & \bfseries{PCS}  & 0.37 & 0.00 & 0.17 & 0.53 & 0.32 & 0.00 & 0.13 & 0.51 & 0.36 & 0.00 & 0.17 & 0.53 & 0.00 & 0 & 0.00 & 0.00 & 0.27 & 0.00 & 0.10 & 0.20 & 0.00 & 0.06 & 0.46 \\
        \midrule
        \multirow{3}{*}{\rotatebox{90}{\boldsymbol{$T\odot H$}}}
        & \bfseries{mean} & 4.48 & 2.72 & 3.16 & 3.62 & 4.42 & 2.69 & 3.13 & 3.57 & 4.48 & 2.72 & 3.15 & 3.61 & 2.53 & 2.00 & 2.11 & 2.20 & 4.27 & 2.69 & 3.10 & 4.27 & 2.64 & 3.04 & 3.46 \\
        & \bfseries{sd}   & 3.12 & 0.59 & 0.63 & 0.74 & 3.05 & 0.58 & 0.63 & 0.70 & 3.12 & 0.59 & 0.64 & 0.72 & 1.22 & 0.28 & 0.37 & 0.47 & 2.78 & 0.58 & 0.63 & 2.98 & 0.58 & 0.62 & 0.64 \\
        & \bfseries{PCS}  & 0.15 & 0.07 & 0.29 & 0.53 & 0.14 & 0.06 & 0.27 & 0.54 & 0.15 & 0.07 & 0.29 & 0.54 & 0.06 & 0.00 & 0.00 & 0.01 & 0.13 & 0.06 & 0.25 & 0.13 & 0.05 & 0.21 & 0.51 \\
        \bottomrule
    \end{tabular}
\end{table}

Our second experiment (see Table~\ref{table:2}) is in a relatively easier setting: we have $k = 4$ with $\lambda_4 = 1.5$. The scree plot estimator catches up with the AIC-type estimators. As $\lambda_4 > \lambda_{\gamma}$, $\gaic{\gamma}$ also performs quite well. Notably, both $\aic$ and $\gaic{2 + \delta_N}$ perform much worse in S-4 than S-1 or S-3, in contrast with the results in the first experiment. We again observe degraded performance for all the estimators in S-1 under the $T\odot H$ noise profile.

\begin{table}[!t]
    \centering
    \setlength{\tabcolsep}{0.22em}
    \caption{$k = 4$, $(\lambda_1, \lambda_2, \lambda_3, \lambda_4)^\top = (10, 3, 1.5, 1.5)^\top$, $\sigma^2 = 1$. $N = 1000$, $\delta_N = \frac{0.1}{\sqrt{N}}$, $\lambda_{2 + \delta_N} = 1.04$, $\gamma = 2.15$, $\lambda_{\gamma} = 1.31$. S-1: oracle $\sigma^2$; S-2: $\hat{\sigma^2_0}$; S-3: $\hat{\sigma^2_{\alpha}}$ with $\alpha = 0.1$; S-4: unknown $\sigma^2$.}
    \label{table:2}
    \fontsize{6}{8}\selectfont
    \begin{tabular}{l|l*{4}{c}|*{4}{c}|*{4}{c}|*{4}{c}|*{3}{c}|*{4}{c}}
        \toprule
        \multicolumn{2}{c}{} & \multicolumn{4}{c}{$\boldsymbol{\aic}$} & \multicolumn{4}{c}{$\boldsymbol{\gaic{2 + \delta_N}}$} & \multicolumn{4}{c}{$\boldsymbol{\saic}$} & \multicolumn{4}{c}{$\boldsymbol{\gaic{\gamma}}$} & \multicolumn{3}{c}{$\boldsymbol{\scree}$} & \multicolumn{4}{c}{$\boldsymbol{\saic^{(\mathrm{ad})}}$} \\
        \midrule
        \multicolumn{2}{c}{} & S-1 & S-2 & S-3 & S-4 & S-1 & S-2 & S-3 & S-4 & S-1 & S-2 & S-3 & S-4 & S-1 & S-2 & S-3 & S-4 & S-1 & S-2 & S-3 & S-1 & S-2 & S-3 & S-4 \\
        \midrule
        \multirow{3}{*}{\rotatebox{90}{\bfseries{GOE}}}
        & \bfseries{mean} & 4.06 & 3.79 & 4.03 & 4.32 & 4.06 & 3.79 & 4.03 & 4.27 & 4.06 & 3.79 & 4.02 & 4.22 & 3.98 & 2.68 & 3.98 & 4 & 4.04 & 3.79 & 4.02 & 4.02 & 3.74 & 4.01 & 4.08 \\
        & \bfseries{sd}   & 0.24 & 0.41 & 0.17 & 0.51 & 0.24 & 0.41 & 0.17 & 0.49 & 0.24 & 0.41 & 0.14 & 0.42 & 0.14 & 0.55 & 0.14 & 0 & 0.20 & 0.41 & 0.14 & 0.14 & 0.44 & 0.10 & 0.27 \\
        & \bfseries{PCS}  & 0.94 & 0.79 & 0.97 & 0.70 & 0.94 & 0.79 & 0.97 & 0.75 & 0.94 & 0.79 & 0.98 & 0.78 & 0.98 & 0.04 & 0.98 & 1 & 0.96 & 0.79 & 0.98 & 0.98 & 0.74 & 0.99 & 0.92 \\
        \midrule
        \multirow{3}{*}{\rotatebox{90}{\bfseries{Rad.}}}
        & \bfseries{mean} & 4.03 & 3.94 & 4 & 4.20 & 4.03 & 3.94 & 4 & 4.16 & 4.02 & 3.94 & 4 & 4.14 & 4 & 2.53 & 4 & 4 & 4.01 & 3.94 & 4 & 4 & 3.88 & 4 & 4.04 \\
        & \bfseries{sd}   & 0.17 & 0.24 & 0 & 0.43 & 0.17 & 0.24 & 0 & 0.37 & 0.14 & 0.24 & 0 & 0.35 & 0 & 0.50 & 0 & 0 & 0.10 & 0.24 & 0 & 0 & 0.33 & 0 & 0.20 \\
        & \bfseries{PCS}  & 0.97 & 0.94 & 1 & 0.81 & 0.97 & 0.94 & 1 & 0.84 & 0.98 & 0.94 & 1 & 0.86 & 1 & 0.00 & 1 & 1 & 0.99 & 0.94 & 1 & 1 & 0.88 & 1 & 0.96 \\
        \midrule
        \multirow{3}{*}{\rotatebox{90}{\boldsymbol{$T\odot H$}}}
        & \bfseries{mean} & 5.53 & 3.75 & 4 & 4.37 & 5.47 & 3.75 & 4 & 4.29 & 5.44 & 3.73 & 4 & 4.24 & 4.18 & 2.71 & 3.92 & 3.96 & 5.28 & 3.73 & 4 & 5.20 & 3.58 & 4 & 4.07 \\
        & \bfseries{sd}   & 2.90 & 0.52 & 0 & 0.54 & 2.82 & 0.52 & 0 & 0.50 & 2.82 & 0.55 & 0 & 0.47 & 1.00 & 0.69 & 0.31 & 0.24 & 2.52 & 0.53 & 0 & 2.58 & 0.62 & 0 & 0.26 \\
        & \bfseries{PCS}  & 0.61 & 0.79 & 1 & 0.66 & 0.62 & 0.79 & 1 & 0.73 & 0.64 & 0.78 & 1 & 0.78 & 0.84 & 0.13 & 0.93 & 0.97 & 0.64 & 0.77 & 1 & 0.68 & 0.65 & 1 & 0.93 \\
        \bottomrule
    \end{tabular}
\end{table}

\subsection{Comparing \texorpdfstring{$\boldsymbol{\aic}$}{} and \texorpdfstring{$\boldsymbol{\gaic{2 + \delta_N}}$}{}}
In Table~\ref{table:3}, we compare the performance of $\aic$ and $\gaic{2 + \delta_N}$ as $N$ varies. We use the same set-up as in our second experiment (except that we use a slightly higher value of $\delta_N = \frac{0.5}{\sqrt{N}}$) and report the results for oracle $\sigma$, i.e. S-1, in Table~\ref{table:3}-(A), and for unknown $\sigma$, i.e. S-4, in Table~\ref{table:3}-(B). The results favour $\gaic{2 + \delta_N}$ in accordance with our theoretical results, the difference being much more pronounced in the setting of unknown $\sigma$.

\begin{table}[!htb]
    \caption{$k = 4$, $(\lambda_1, \lambda_2, \lambda_3, \lambda_4)^\top = (10, 3, 1.5, 1.5)^\top$, $\sigma^2 = 1$. $\delta_N = \frac{0.5}{\sqrt{N}}$.}
    \label{table:3}
    \begin{subtable}{.5\linewidth}
      \fontsize{8}{10}\selectfont
      \centering
        \caption{$\sigma$ known}
        \begin{tabular}{l|l*{5}{c}}
            \toprule
            \multicolumn{1}{c}{} & $N$ & 1000 & 2000 & 3000 & 4000 & 5000 \\
            \midrule
            \multirow{3}{*}{$\boldsymbol{\aic}$}
            & \bfseries{mean} & 4.06 & 4.12 & 4.05 & 4.07 & 4.15 \\
            & \bfseries{sd}   & 0.24 & 0.33 & 0.22 & 0.26 & 0.39 \\
            & \bfseries{PCS}  & 0.94 & 0.88 & 0.95 & 0.93 & 0.86 \\
            \midrule
            \multirow{3}{*}{$\boldsymbol{\gaic{2 + \delta_N}}$}
            & \bfseries{mean} & 4.03 & 4.03 & 4.01 & 4.04 & 4.02 \\
            & \bfseries{sd}   & 0.17 & 0.17 & 0.10 & 0.20 & 0.14 \\
            & \bfseries{PCS}  & 0.97 & 0.97 & 0.99 & 0.96 & 0.98 \\
            \bottomrule
        \end{tabular}
    \end{subtable}%
    \begin{subtable}{.5\linewidth}
      \fontsize{8}{10}\selectfont
      \centering
        \caption{$\sigma$ unknown}
        \begin{tabular}{l|l*{5}{c}}
            \toprule
            \multicolumn{1}{c}{} & $N$ & 1000 & 2000 & 3000 & 4000 & 5000 \\
            \midrule
            \multirow{3}{*}{$\boldsymbol{\aic}$}
            & \bfseries{mean} & 4.32 & 4.24 & 4.30 & 4.26 & 4.29 \\
            & \bfseries{sd}   & 0.51 & 0.43 & 0.46 & 0.48 & 0.48 \\
            & \bfseries{PCS}  & 0.70 & 0.76 & 0.70 & 0.76 & 0.72 \\
            \midrule
            \multirow{3}{*}{$\boldsymbol{\gaic{2 + \delta_N}}$}
            & \bfseries{mean} & 4.09 & 4.11 & 4.03 & 4.06 & 4.10 \\
            & \bfseries{sd}   & 0.29 & 0.31 & 0.17 & 0.24 & 0.33 \\
            & \bfseries{PCS}  & 0.91 & 0.89 & 0.97 & 0.94 & 0.91 \\
            \bottomrule
        \end{tabular}
    \end{subtable} 
\end{table}

\subsection{Estimating the number of communities}
\begin{figure}
    \centering
\begin{tabular}{cc}
    \includegraphics[width=.48\textwidth]{./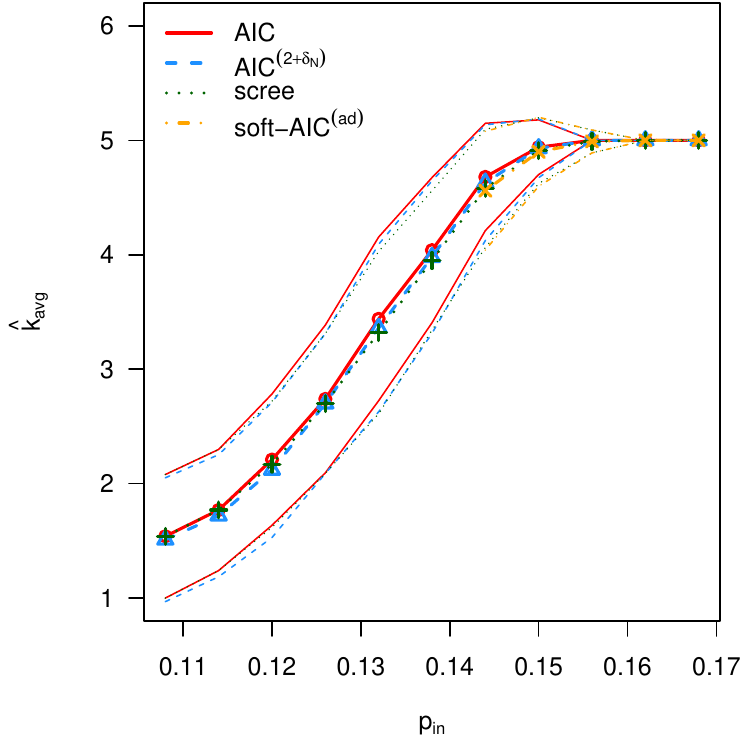} & \includegraphics[width=.48\textwidth]{./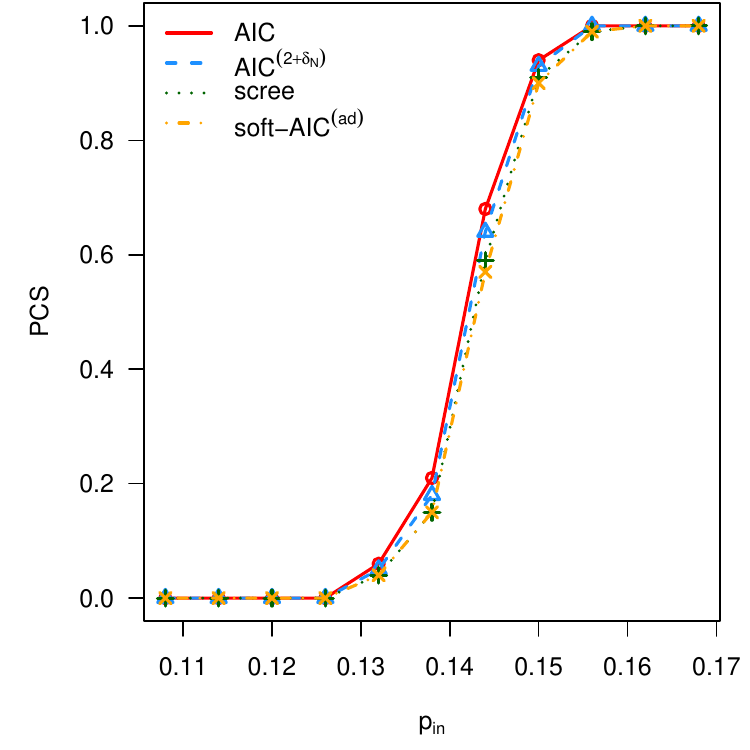} 
\end{tabular}
\caption{Comparison of various estimators of $k$ under SBM with equal community sizes. Here $N = 500$, $k = 5$, $p_{\across} = 0.06$. The results are based on $100$ Monte Carlo runs.}
\label{fig:SBM}
\end{figure}

In this section, we present some empirical results on estimating the number of communities in an SBM using the criterion \eqref{eq:aic_comm}. We consider SBMs on $n = 500$ vertices with $k = 5$ equal-sized communities with $p_{\within}$ varying between $0.11$ and $0.17$ and $p_{\across} = 0.06$. Figure~\ref{fig:SBM} shows the performances of the various estimators. They seem to have comparable performance with $\aic$ and $\gaic{2 + \delta_N}$ showing a slight edge over the scree plot estimator and the $\saic^{(\mathrm{ad})}$.

\begin{table}[h]
    \centering
    \setlength{\tabcolsep}{0.3em}
    \caption{Results (based on $100$ Monte Carlo runs) for SBM with $N = 500$, $k = 5$, and block connection probability matrix $\Pi$ as in \eqref{eq:block-conn-mat}; $\delta_N = \frac{0.1}{\sqrt{N}}$.}
    \label{table:4}
    \fontsize{8}{10}\selectfont
    \begin{tabular}{l|c|c|c|c}
    \toprule
      & $\boldsymbol{\aic}$ & $\boldsymbol{\gaic{2 + \delta_N}}$ & $\boldsymbol{\scree}$ & $\boldsymbol{\saic^{(\mathrm{ad})}}$ \\
    \midrule
        \textbf{mean} & 5.08 & 4.99 & 4.92 & 4.95 \\
        \textbf{sd}   & 0.69 & 0.66 & 0.66 & 0.65 \\
        \textbf{PCS}  & 0.55 & 0.60 & 0.59 & 0.60 \\
    \bottomrule
    \end{tabular}
\end{table}

We also consider a setting where the within-community and across-community connection probabilities are different. We take $k = 5$ equal sized communities and a $k\times k$ block connection probability matrix
\begin{equation}\label{eq:block-conn-mat}
    \Pi = \begin{pmatrix}
        0.073 & 0.015 & 0.005 & 0.013 & 0.017 \\
        0.015 & 0.079 & 0.004 & 0.006 & 0.004 \\
        0.005 & 0.004 & 0.078 & 0.009 & 0.008 \\
        0.013 & 0.006 & 0.009 & 0.063 & 0.013 \\
        0.017 & 0.004 & 0.008 & 0.013 & 0.072
    \end{pmatrix}.
\end{equation}
The results are reported in Table~\ref{table:4}.

Finally, we apply the methods to five real datasets (available at \url{https://websites.umich.edu/~mejn/netdata/}). Despite the apparent crudeness of the methods inherent in the construction of $\hat{M}$ (e.g., not accounting for possible inhomogeneities in the block connection probabilities or for degree heterogeneities), they seem to perform reasonably well in these examples (see Table~\ref{table:5}).
\begin{table}
    \centering
    \caption{Estimating the number of communities in several real networks. $\delta_N = \frac{0.1}{\sqrt{N}}$.}
    \label{table:5}
    \fontsize{8}{10}\selectfont
    \begin{tabular}{r|cccccc}
        \toprule
        Data & $N$ & Ground truth $k$ & $\aic$ & $\gaic{2 + \delta_N}$ & $\scree$ & $\saic^{(\mathrm{ad})}$ \\
        \midrule
        dolphins       & 62  & 2, 4 \cite{Lusseau2003, liu2016discovering} & 4  & 4  & 4  & 4  \\
        les miserables & 77  & 6 \cite{newman2016estimating}               & 5  & 5  & 5  & 5  \\
        football       & 115 & 11 \cite{newman2016estimating}              & 10 & 10 & 10 & 10 \\
        polbooks       & 105 & 3, 4 \cite{Newman2006}                      & 5  & 5  & 4  & 4  \\
        karate         & 34  & 2 \cite{zachary1977information, Newman2006} & 3  & 3  & 3  & 3  \\
        \bottomrule
    \end{tabular}
\end{table}

\section{Discussion}\label{sec:conc}
The AIC-type model selection criteria developed in this paper are quite flexible in terms of their applicability. As random matrix theoretic results become available under more general spiked models (e.g., correlated noise models, general SBMs, or degree corrected SBMs, etc.), asymptotic consistency properties of the proposed criteria may be readily derived. Nevertheless, there remain a number of relevant questions not answered in this article that may constitute future research directions. The more prominent ones include extending our results to allow $k$ to grow with $N$, establishing strong consistency of $\gaic{2 + \delta_N}$, investigation of a possible data-driven choice of $\delta_N$ and establishing non-asymptotic error bounds.

\section*{Acknowledgements}
We thank the anonymous referees and the associate editor for their many constructive comments and suggestions, which have led to a significantly improved article. This research was partially supported by the INSPIRE research grant DST/INSPIRE/04/2018/002193 from the Department of Science and Technology, Government of India, and a Start-Up Grant from Indian Statistical Institute.

\bibliographystyle{alpha-abbrv}
\bibliography{swm-aic.bib}

\newcommand{\etalchar}[1]{$^{#1}$}
\begin{thebibliography}{PWBM18}

\bibitem[AC19]{adhikari2019edge}
A.~Adhikari and Z.~Che.
\newblock {Edge universality of correlated Gaussians}.
\newblock {\em Electronic Journal of Probability}, 24(none):1 -- 25, 2019.

\bibitem[AEKS20]{alt2020correlated}
J.~Alt, L.~Erdős, T.~Kr{\"u}ger, and D.~Schr{\"o}der.
\newblock {Correlated random matrices: Band rigidity and edge universality}.
\newblock {\em The Annals of Probability}, 48(2):963 -- 1001, 2020.

\bibitem[Aka98]{akaike1998information}
H.~Akaike.
\newblock Information theory and an extension of the maximum likelihood principle.
\newblock In {\em Selected papers of Hirotugu Akaike}, pages 199--213. Springer, 1998.

\bibitem[AKJ20]{el2020fundamental}
A.~E. Alaoui, F.~Krzakala, and M.~Jordan.
\newblock {Fundamental limits of detection in the spiked Wigner model}.
\newblock {\em The Annals of Statistics}, 48(2):863 -- 885, 2020.

\bibitem[BCF18]{bai2018consistency}
Z.~Bai, K.~P. Choi, and Y.~Fujikoshi.
\newblock Consistency of {AIC} and {BIC} in estimating the number of significant components in high-dimensional principal component analysis.
\newblock {\em The Annals of Statistics}, 46(3):1050--1076, 2018.

\bibitem[BCFH22]{bai2022asymptotics}
Z.~Bai, K.~P. Choi, Y.~Fujikoshi, and J.~Hu.
\newblock Asymptotics of {AIC}, {BIC} and ${C}_p$ model selection rules in high-dimensional regression.
\newblock {\em Bernoulli}, 28(4):2375--2403, 2022.

\bibitem[BCFH23]{bai2023koo}
Z.~Bai, K.~P. Choi, Y.~Fujikoshi, and J.~Hu.
\newblock {KOO} approach for scalable variable selection problem in large-dimensional regression.
\newblock {\em arXiv preprint arXiv:2303.17230}, 2023.

\bibitem[BGGM11]{benaych2011fluctuations}
F.~Benaych-Georges, A.~Guionnet, and M.~Maida.
\newblock Fluctuations of the extreme eigenvalues of finite rank deformations of random matrices.
\newblock {\em Electronic Journal of Probability}, 16:1621--1662, 2011.

\bibitem[BGN11]{benaych2011eigenvalues}
F.~Benaych-Georges and R.~R. Nadakuditi.
\newblock The eigenvalues and eigenvectors of finite, low rank perturbations of large random matrices.
\newblock {\em Advances in Mathematics}, 227(1):494--521, 2011.

\bibitem[BM14]{bose2014bulk}
A.~Bose and S.~S. Mukherjee.
\newblock {Bulk behavior of Schur--Hadamard products of symmetric random matrices}.
\newblock {\em Random Matrices: Theory and Applications}, 3(02):1450007, 2014.

\bibitem[BMP24]{banerjee2024edge}
D.~Banerjee, S.~S. Mukherjee, and D.~Pal.
\newblock {Edge spectra of Gaussian random symmetric matrices with correlated entries}.
\newblock {\em arXiv preprint arXiv:2409.11381}, 2024.

\bibitem[Boz87]{bozdogan1987model}
H.~Bozdogan.
\newblock {Model selection and Akaike's information criterion (AIC): The general theory and its analytical extensions}.
\newblock {\em Psychometrika}, 52(3):345--370, 1987.

\bibitem[CDMF09]{capitaine2009largest}
M.~Capitaine, C.~Donati-Martin, and D.~F{\'e}ral.
\newblock {The largest eigenvalues of finite rank deformation of large Wigner matrices: Convergence and nonuniversality of the fluctuations}.
\newblock {\em The Annals of Probability}, 37(1):1 -- 47, 2009.

\bibitem[CH08]{claeskens2008model}
G.~Claeskens and N.~L. Hjort.
\newblock {\em Model selection and model averaging}, volume 330.
\newblock Cambridge University Press Cambridge, 2008.

\bibitem[CL19]{chung2019weak}
H.~W. Chung and J.~O. Lee.
\newblock {Weak detection of signal in the spiked Wigner model}.
\newblock In {\em International Conference on Machine Learning}, pages 1233--1241. PMLR, 2019.

\bibitem[CL22]{chung2022weak}
H.~W. Chung and J.~O. Lee.
\newblock {Weak detection in the spiked Wigner model}.
\newblock {\em IEEE Transactions on Information Theory}, 68(11):7427--7453, 2022.

\bibitem[CLL22]{chung2022asymptotic}
H.~W. Chung, J.~Lee, and J.~O. Lee.
\newblock Asymptotic normality of log likelihood ratio and fundamental limit of the weak detection for spiked {W}igner matrices.
\newblock {\em arXiv preprint arXiv:2203.00821}, 2022.

\bibitem[CMC20]{chakraborty2020high}
A.~Chakraborty, S.~S. Mukherjee, and A.~Chakrabarti.
\newblock High dimensional {PCA}: a new model selection criterion.
\newblock {\em arXiv preprint arXiv:2011.04470}, 2020.

\bibitem[CW17]{clarkson2017low}
K.~L. Clarkson and D.~P. Woodruff.
\newblock {Low-rank PSD approximation in input-sparsity time}.
\newblock In {\em Proceedings of the Twenty-Eighth Annual ACM-SIAM Symposium on Discrete Algorithms}, pages 2061--2072. SIAM, 2017.

\bibitem[EYY12]{erdHos2012rigidity}
L.~Erd{\H{o}}s, H.-T. Yau, and J.~Yin.
\newblock Rigidity of eigenvalues of generalized wigner matrices.
\newblock {\em Advances in Mathematics}, 229(3):1435--1515, 2012.

\bibitem[FS16]{fujikoshi2016high}
Y.~Fujikoshi and T.~Sakurai.
\newblock High-dimensional consistency of rank estimation criteria in multivariate linear model.
\newblock {\em Journal of Multivariate Analysis}, 149:199--212, 2016.

\bibitem[FSY14]{fujikoshi2014consistency}
Y.~Fujikoshi, T.~Sakurai, and H.~Yanagihara.
\newblock Consistency of high-dimensional aic-type and cp-type criteria in multivariate linear regression.
\newblock {\em Journal of Multivariate Analysis}, 123:184--200, 2014.

\bibitem[GLZ24]{geng2024outliers}
R.~Geng, D.-Z. Liu, and G.~Zou.
\newblock Outliers for deformed inhomogeneous random matrices.
\newblock {\em arXiv preprint arXiv:2407.12182}, 2024.

\bibitem[HLY23]{han2023spectral}
Y.~Han, J.~O. Lee, and W.~Yang.
\newblock Spectral properties and weak detection in stochastic block models.
\newblock {\em arXiv preprint arXiv:2309.08183}, 2023.

\bibitem[HZZ20]{hu2020detection}
J.~Hu, J.~Zhang, and J.~Zhu.
\newblock {Detection of the number of principal components by extended AIC-type method}.
\newblock {\em arXiv preprint arXiv:1708.07595v7}, 2020.

\bibitem[JCL20]{jung2020weak}
J.~H. Jung, H.~W. Chung, and J.~O. Lee.
\newblock Weak detection in the spiked {W}igner model with general rank.
\newblock {\em arXiv preprint arXiv:2001.05676}, 2020.

\bibitem[JCL21]{jung2021detection}
J.~H. Jung, H.~W. Chung, and J.~O. Lee.
\newblock Detection of signal in the spiked rectangular models.
\newblock In {\em International Conference on Machine Learning}, pages 5158--5167. PMLR, 2021.

\bibitem[JCL23]{jung2023detection}
J.~H. Jung, H.~W. Chung, and J.~O. Lee.
\newblock Detection problems in the spiked matrix models.
\newblock {\em arXiv preprint arXiv:2301.05331}, 2023.

\bibitem[Joh01]{johnstone2001distribution}
I.~M. Johnstone.
\newblock On the distribution of the largest eigenvalue in principal components analysis.
\newblock {\em Annals of statistics}, pages 295--327, 2001.

\bibitem[KY13]{Knowles2013}
A.~Knowles and J.~Yin.
\newblock The isotropic semicircle law and deformation of wigner matrices.
\newblock {\em Communications on Pure and Applied Mathematics}, 66(11):1663–1749, April 2013.

\bibitem[LJPW16]{liu2016discovering}
W.~Liu, X.~Jiang, M.~Pellegrini, and X.~Wang.
\newblock Discovering communities in complex networks by edge label propagation.
\newblock {\em Scientific reports}, 6(1):22470, 2016.

\bibitem[LR15]{lei2015consistency}
J.~Lei and A.~Rinaldo.
\newblock Consistency of spectral clustering in stochastic block models.
\newblock {\em The Annals of Statistics}, 43(1):215--237, 2015.

\bibitem[LSB{\etalchar{+}}03]{Lusseau2003}
D.~Lusseau, K.~Schneider, O.~J. Boisseau, P.~Haase, E.~Slooten, and S.~M. Dawson.
\newblock The bottlenose dolphin community of doubtful sound features a large proportion of long-lasting associations.
\newblock {\em Behavioral Ecology and Sociobiology}, 54(4):396–405, September 2003.

\bibitem[Muk22]{mukherjee2022convergence}
S.~S. Mukherjee.
\newblock {On $*$-Convergence of Schur–Hadamard Products of Independent Nonsymmetric Random Matrices}.
\newblock {\em International Mathematics Research Notices}, page rnac215, 08 2022.

\bibitem[New06]{Newman2006}
M.~E.~J. Newman.
\newblock Modularity and community structure in networks.
\newblock {\em Proceedings of the National Academy of Sciences}, 103(23):8577–8582, June 2006.

\bibitem[NR16]{newman2016estimating}
M.~E. Newman and G.~Reinert.
\newblock Estimating the number of communities in a network.
\newblock {\em Physical review letters}, 117(7):078301, 2016.

\bibitem[PKK23]{pak2023optimal}
A.~Pak, J.~Ko, and F.~Krzakala.
\newblock Optimal algorithms for the inhomogeneous spiked {W}igner model.
\newblock {\em arXiv preprint arXiv:2302.06665}, 2023.

\bibitem[PWBM18]{perry2018optimality}
A.~Perry, A.~S. Wein, A.~S. Bandeira, and A.~Moitra.
\newblock {Optimality and sub-optimality of PCA I: Spiked random matrix models}.
\newblock {\em The Annals of Statistics}, 46(5):2416--2451, 2018.

\bibitem[Sch78]{schwarz1978estimating}
G.~Schwarz.
\newblock Estimating the dimension of a model.
\newblock {\em The Annals of Statistics}, 6(2):461--464, 1978.

\bibitem[Yan15]{yanagihara2015conditions}
H.~Yanagihara.
\newblock Conditions for consistency of a log-likelihood-based information criterion in normal multivariate linear regression models under the violation of the normality assumption.
\newblock {\em Journal of the Japan Statistical Society}, 45(1):21--56, 2015.

\bibitem[YWF15]{Yanagihara2015}
H.~Yanagihara, H.~Wakaki, and Y.~Fujikoshi.
\newblock A consistency property of the aic for multivariate linear models when the dimension and the sample size are large.
\newblock {\em Electronic Journal of Statistics}, 9(1), January 2015.

\bibitem[Zac77]{zachary1977information}
W.~W. Zachary.
\newblock An information flow model for conflict and fission in small groups.
\newblock {\em Journal of anthropological research}, 33(4):452--473, 1977.

\end{thebibliography}

\newpage
\appendix
\section{Proofs}\label{sec:proofs}
\subsection{Proofs for spiked Wigner}
Note that as long as $j = o(N)$,
\begin{equation}\label{eq:sigma2hat-conv-sigma2}
    \hat{\sigma^2_j} \convas \int x^2 \varrho_{\mathrm{sc}}(x; \sigma^2) \, dx = \sigma^2.
\end{equation}
Because of this, and the fact that we have only one extra parameter in the case of unknown $\sigma$, the proofs of Theorems~\ref{thm:gamma-aic}, \ref{thm:almost-aic-weak-consistency} and \ref{thm:soft-aic} will essentially be the same regardless of whether $\sigma$ is known or unknown. For simplicity, we will only write down the details for the case of known $\sigma$.

\begin{proof}[Proof of Theorem~\ref{thm:gamma-aic}]
    For $j < k$, we have, using Assumption~\ref{assmp:bbp}-(a), that
\begin{align*}
    \frac{1}{N}(\gaic{\gamma}_j - \gaic{\gamma}_k) &=  \frac{1}{2\sigma^2} \sum_{i = j + 1}^k \ell_i^2 - \gamma (k - j) \bigg(1 - \frac{k + j - 1}{2N} \bigg) \\
              &\convas \frac{1}{2\sigma^2} \sum_{i = j + 1}^k (\psi_{\sigma}(\lambda_i))^2 - \gamma (k - j) \\
              &\ge \frac{1}{2\sigma^2} (k - j) (\psi_{\sigma}(\lambda_k))^2 -  \gamma (k - j) \\
              &= \frac{(k - j)}{2\sigma^2} [(\psi_{\sigma}(\lambda_k))^2 -  2\gamma \sigma^2].
\end{align*}
Note that for $\gamma \le 2$,
\begin{align*}
    (\psi_{\sigma}(\lambda_k))^2 - 2\gamma\sigma^2 &> 4\sigma^2 - 2\gamma \sigma^2 \\
                                                   &= 2\sigma^2 (2 - \gamma) \ge 0,
\end{align*}
where we have used the fact that $\lambda_k > \sigma$ implies that $\psi_\sigma(\lambda_k) > 2 \sigma$. It follows that
\[
    \liminf_{N \to \infty}\hat{k}_{\gamma} \ge k \text{ a.s.}
\]
for any $\gamma \le 2$. This establishes (a).

On the other hand, for $j > k$, Assumption~\ref{assmp:bbp}-(b) gives that
\begin{align*}
    \frac{1}{N}(\gaic{\gamma}_j - \gaic{\gamma}_k) &= -\frac{1}{2\sigma^2} \sum_{i = k + 1}^j \ell_i^2 + \gamma (j - k) \bigg(1 - \frac{k + j - 1}{2N} \bigg) \\
              &\convas -\frac{1}{2\sigma^2} \sum_{i = k + 1}^j 4 \sigma^2 + \gamma (j - k) \\
              &= (\gamma - 2) (j - k) > 0
\end{align*}
if $\gamma > 2$. It follows that
\[
    \limsup_{N \to \infty}\hat{k}_{\gamma} \le k \text{ a.s.}
\]
for any $\gamma > 2$. This establishes (b).

Finally, for $\gamma > 2$, $\hat{k}_{\gamma}$ will be strongly consistent provided
\[
    \psi_{\sigma}(\lambda_k) > \sqrt{2\gamma} \sigma > 2\sigma
\]
which is equivalent to
\[
    \lambda_k > \psi_{\sigma}^{-1}(\sqrt{2\gamma}\sigma) > \psi_{\sigma}^{-1}(2\sigma) = \sigma.
\]
This establishes (c).
\end{proof}

We now sketch how one can relax the condition that $q = O(1)$ under Assumption~\ref{assmp:bbp+rigidity} (with $q_N = N$). For this, we shall need the following result.
\begin{lemma}\label{lem:I_gamma}
    For $\alpha \in [0, 1]$, let $q_{1 - \alpha}$ denote the $(1 - \alpha)$-th quantile of the standard semi-circle law, i.e. $\int_{q_{1 - \alpha}}^2 \varrho_{\semicirc}(t; 1) \, dt = \alpha$. Define
\[
    I_{\gamma}(\alpha) = \gamma \alpha (1 - \alpha / 2) -\frac{1}{2} \int_{q_{1 - \alpha}}^2 t^2 \varrho_{\semicirc}(t; 1) \, dt.
\]
Then for $\gamma > 2$, we have that $I_{\gamma}(\alpha) > 0$ for all $\alpha \in (0, 1)$.
\end{lemma}
\begin{proof}
    Note that $I_{\gamma}(0) = 0$ and $I_{\gamma}(1) = \frac{\gamma - 1}{2} > 0$. Also, for $\alpha \in (0, 1)$,
    \[
        I_{\gamma}'(\alpha) = \gamma - \gamma \alpha + \frac{1}{2} q_{1 - \alpha}^2 \varrho_{\semicirc}(q_{1 - \alpha}; 1) \frac{d q_{1 - \alpha}}{d\alpha} = \gamma - \gamma \alpha - \frac{1}{2} q^2_{1 - \alpha}.
    \]
    Further, $I_{\gamma}'(0+) = \gamma - 2 > 0$, $I_{\gamma}'(1-) = -2 < 0$. Note also that $I_{\gamma}'(1/2) = \gamma / 2 > 0$. Finally, note that
    \[
        I_{\gamma}''(\alpha) = -\gamma + \frac{2 \pi q_{1 - \alpha}}{\sqrt{4 - q_{1 - \alpha}^2}}.
    \]
    It follows that $I_{\gamma}''(\alpha) = 0$ if and only if $q_{1 - \alpha} = \frac{2 \gamma}{\sqrt{\gamma^2 + 4\pi^2}}$, i.e.
    \[
        \alpha = \alpha_*(\gamma) := \int_{\frac{2\gamma}{\sqrt{\gamma^2 + 4 \pi^2}}}^2 \varrho_{\semicirc}(t; 1) \, dt.
    \]
    Also, $I_{\gamma}''(\alpha) > 0$ on $(0, \alpha_*(\gamma))$ and $I_{\gamma}''(\alpha) < 0$ on $(\alpha_*(\gamma), 1)$. Therefore $I_{\gamma}'$ first (strictly) increases up to $\alpha_*(\gamma)$ and then (strictly) decreases. Therefore, there exists a unique location $\tilde{\alpha}(\gamma)$  with $\tilde{\alpha}(\gamma) > 1/2 > \alpha_*(\gamma)$ at which $I_{\gamma}'$ vanishes. At this point $I_{\gamma}$ is at its global maximum. After this point $I_{\gamma}$ strictly decreases. Since $I_{\gamma}(1) > 0$, we conclude that $I_{\gamma}(\alpha) > 0$ for all $\alpha \in (0, 1)$. 
\end{proof}

Now let us revisit the proof of Theorem~\ref{thm:gamma-aic}-(b). We must now handle the cases (i) $j / N \to 0$ with $j - k \to \infty$ and (ii) $j / N \to \alpha > 0$.

If $j / N \to 0$ with $j - k \to \infty$, then by Assumption~\ref{assmp:bbp+rigidity}-(b) with $q_N = N$,
\begin{align*}
    \frac{1}{N (j - k)}(\gaic{\gamma}_j - \gaic{\gamma}_k) &= - \frac{1}{2\sigma^2} \frac{1}{j - k} \sum_{i = k + 1}^j \ell_i^2 + \gamma \bigg(1 - \frac{k + j - 1}{2N} \bigg) \\
                                                           &\convas \gamma - 2 > 0.
\end{align*}
If, on the other hand, $ j > k$ and $j / N \to \alpha$, then note that by Assumption~\ref{assmp:bbp+rigidity}-(b) with $q_N = N$,
\begin{align*}
    \frac{1}{N^2}(\gaic{\gamma}_j - \gaic{\gamma}_k) &= -\frac{1}{2\sigma^2} \frac{1}{N} \sum_{i = k + 1}^j \ell_i^2 + \gamma \cdot \frac{j - k}{N} \bigg(1 - \frac{k + j - 1}{2N} \bigg) \\
                                                     &\convas -\frac{1}{2} \int_{q_{1 - \alpha}}^2 t^2 \varrho_{\semicirc}(t; 1) \, dt + \gamma \alpha (1 - \alpha / 2) \\
                                                     &= I_{\gamma}(\alpha) > 0,
\end{align*}
where the last inequality holds since $\gamma > 2$ (see Lemma~\ref{lem:I_gamma}). It follows that
\[
    \limsup_{N \to \infty}\hat{k}_{\gamma} \le k \text{ a.s.}
\]
for any $\gamma > 2$.

\begin{proof}[Proof of Theorem~\ref{thm:almost-aic-weak-consistency}]
    By Assumption~\ref{assmp:fluc-order}, $\tilde{\ell}_{j} = N^{2/3}(\ell_{j} - 2\sigma) = O_P(1)$ for $k < j < q$. Therefore, for $k < j < q$,
\begin{align*}
    \frac{1}{N}(\gaic{\gamma}_j - \gaic{\gamma}_k) &= -\frac{1}{2\sigma^2} \sum_{i = k + 1}^j \ell_i^2 + \gamma_N (j - k) \bigg(1 - \frac{k + j - 1}{2N} \bigg) \\
                                                   &= -\frac{1}{2\sigma^2} \sum_{i = k + 1}^j (2\sigma + N^{-2/3}\tilde{\ell_i})^2 + (2 + \delta_N) (j - k) \bigg(1 - \frac{k + j - 1}{2N} \bigg) \\
                                                   &= \delta_N (j - k) - \frac{1}{\sigma} \sum_{i = k + 1}^j N^{-2/3} \tilde{\ell}_i + O_P\bigg(\frac{1}{N}\bigg) \\
                                                   &\ge \delta_N (j - k) - \frac{1}{\sigma} (j - k) N^{-2/3} \tilde{\ell}_{k + 1} + O_P\bigg(\frac{1}{N}\bigg) \\
                                                   &= \delta_N(j - k) (1 + o_P(1)),
\end{align*}
provided $\delta_N \gg N^{-2/3}$. Thus $\bP(\gaic{\gamma}_j - \gaic{\gamma}_k \ge \epsilon N\delta_N(j - k)) \rightarrow 1$ for any $\epsilon \in (0, 1)$. It follows that $\bP(\hat{k}_\gamma \le k) \rightarrow 1$. Combining this with Theorem~\ref{thm:gamma-aic}-(a), we get the desired weak-consistency result.
\end{proof}

\begin{proof}[Proof of Theorem~\ref{thm:soft-aic}]
    Let $\xi_0 := \liminf_{N \to \infty} \hat{\xi} > 0$. There is a set $E$ with $\bP(E) = 0$, such that for all $\omega \in E^c$, we have the following:
\begin{enumerate}
    \item[(i)]  For $j < k$,
        \[
            \frac{1}{N}(\aic_j - \aic_k) \to \sum_{i = j + 1}^k \frac{1}{2\sigma^2}[\psi_{\sigma}(\lambda_i)^2 - 4\sigma^2] \ge (k - j) \xi_k \ge \xi_k.
        \]
        \item[(ii)] For $k < j < q$,
        \[
            \frac{1}{N}(\aic_j - \aic_k) \to 0.
        \]
    \item[(iii)] For all sufficiently large $N$ (depending possibly on $\omega$), $\frac{\xi_0}{2} < \hat{\xi} < \frac{6}{5}\xi_k$.
\end{enumerate}
Thus, for any $\omega \in E^c$, we can choose $N_0(\omega)$ large enough such that for all $N \ge N_0(\omega)$, the following hold:
\begin{enumerate}
    \item[(a)] For all $j < k$, $\frac{1}{N}(\aic_j - \aic_k) \ge \frac{4\xi_k}{5} > \frac{2\hat{\xi}}{3}$.

    \item[(b)] For all $k < j < q$, $\frac{1}{N}(\aic_j - \aic_k) < \frac{\xi_0}{6} < \frac{\hat{\xi}}{3}$.
\end{enumerate}
Together, (a) and (b) imply that
\[
    \frac{1}{N}|\aic_k - \min_{0 \le j' < q} \aic_{j'}| < \frac{\hat{\xi}}{3},
\]
and for $j < k$,
\[
    \frac{1}{N}|\aic_j - \min_{0 \le j' < q} \aic_{j'}| > \frac{1}{N}|\aic_j - \aic_k| - \frac{1}{N}|\aic_k - \min_{0 \le j' < q} \aic_{j'}| > \frac{2\hat{\xi}}{3} - \frac{\hat{\xi}}{3} = \frac{\hat{\xi}}{3}.
\]
 Therefore $\hat{k}_{2, \,\soft} = k$ for all $N \ge N_0(\omega)$. This completes the proof.
\end{proof}

\begin{proof}[Proof of Proposition~\ref{prop:scree}]
    This is an immediate consequence of the following facts:
    \begin{enumerate}
        \item[(a)] For $k < j < q$, $\limsup_{N \to \infty}\frac{\ell_j}{2\hat{\sigma}} \le 1$ almost surely.
        \item[(b)] For $j \le k$, we have $\ell_j \convas \psi_{\sigma}(\lambda_j) > 2 \sigma$ so that $\lim_{N \to \infty} \frac{\ell_j}{2\hat{\sigma}} > 1$.
    \end{enumerate}
\end{proof}

\begin{proof}[Proof of Theorem~\ref{thm:two-step}]
We know that $\hat{k}_{\scree} \convas k$. Thus there is an event $N$ with $\bP(N) = 0$ such that for any $\omega \notin N$, for all large enough $n$, one has $\hat{k}_{\scree}(\omega) = k$ and consequently, $\hat{\xi}_{\hat{k}_{\scree}} = \hat{\xi}_k$ . We also have that $\hat{\xi}_k \convas \xi_k$. It follows that
\[
    \hat{\xi}_{\hat{k}_{\scree}} \convas \xi_k.
\]
Since $\hat{\xi}_1 \convas \xi_1$, we conclude that
\[
    \hat{B} \convas \frac{\xi_1}{\xi_k}.
\]
It follows that
\[
    \hat{\xi}_{\scree} = \frac{1}{q\hat{B}} \sum_{j = 1}^q \hat{\xi}_j \convas \frac{1}{\frac{q\xi_1}{\xi_k}} \sum_{j = 1}^k \xi_j \in (0, \xi_k).
\]
Thus the threshold $\hat{\xi}_{\scree}$ satisfies condition \eqref{eq:soft-aic-thres-prop}. Hence Theorem~\ref{thm:soft-aic} applies.
\end{proof}

\subsection{Proofs for the SBM}
We begin with the fact that by the Chernoff bound, for any $0 < \delta < 1$, one has 
\[
    \bP(|\hat{p}_{\avg} - p_{\avg}| \ge \delta p_{\avg}) \le 2 \exp(-\delta^2 N^2 p_{\avg} / 6).
\]
Therefore, when $p_{\avg} \gg N^{-2\beta}$, $\beta \in (0, 1)$, we have
\[
    \bP(|\hat{p}_{\avg} - p_{\avg}| \ge \delta p_{\avg}) \ll 2 \exp(-\delta^2 N^{2(1 - \beta)} / 6).
\]
Choosing $\delta = \frac{\sqrt{\log N}}{N^{1 - \beta}}$, we have that with probability at least $1 - O(N^{-2})$,
\[
    |\hat{p}_{\avg} - p_{\avg}| < p_{\avg} \frac{\sqrt{\log N}}{N^{1 - \beta}}. 
\]
\begin{lemma}
    We have
    \[
        \big|\frac{\hat{\varsigma}}{\varsigma} - 1\big| \le \frac{N|\hat{p}_{\avg} - p_{\avg}|}{\varsigma^2} + \frac{k \theta_N^2}{N}.
    \]
\end{lemma}
\begin{proof}
We first note that
\[
    |\frac{\hat{\varsigma}}{\varsigma} - 1| = \frac{|(\frac{\hat{\varsigma}}{\varsigma})^2 - 1|}{\frac{\hat{\varsigma}}{\varsigma} + 1} \le \big|\bigg(\frac{\hat{\varsigma}}{\varsigma}\bigg)^2 - 1\big|.
\]
Therefore it is enough to prove that
\[
    \bigg|\bigg(\frac{\hat{\varsigma}}{\varsigma}\bigg)^2 - 1\bigg| \le \frac{N|\hat{p}_{\avg} - p_{\avg}|}{\varsigma^2} + \frac{k \theta_N^2}{N}.
\]
Towards that end, note that
\begin{align*}
    |\hat{\varsigma}^2 - \varsigma^2| &= N \bigg|\hat{p}_{\avg}(1 - \hat{p_{\avg}}) - \frac{p_{\within}(1 - p_{\within}) + (k - 1) p_{\across}(1 - p_{\across})}{k}\bigg| \\
                                &\le N \bigg|\hat{p}_{\avg}(1 - \hat{p_{\avg}}) - p_{\avg} (1 - p_{\avg})\bigg| \\
                                &\hspace{4em}+ N\bigg|p_{\avg} (1 - p_{\avg}) - \frac{p_{\within}(1 - p_{\within}) + (k - 1) p_{\across}(1 - p_{\across})}{k}\bigg| \\
                                &\le N|\hat{p}_{\avg} - p_{\avg}| + \frac{\varsigma^2 k\theta_N^2}{N},
\end{align*}
where we have used the following bound
\begin{align*}
    N\bigg|p_{\avg} (1 - p_{\avg}) &- \frac{p_{\within}(1 - p_{\within}) + (k - 1) p_{\across}(1 - p_{\across})}{k}\bigg| \\
                                   &=N\bigg|\frac{p_{\within}^2 + (k - 1)p_{\across}^2}{k} - p_{\avg}^2\bigg| \\
                                   &=N\bigg|\frac{p_{\within}^2 + (k - 1)p_{\across}^2}{k} - \frac{p_{\within}^2 + (k - 1)^2 p_{\across}^2 + 2 (k - 1) p_{\within} p_{\across}}{k^2}\bigg|\\
                                   &=\frac{N(k - 1)}{k^2}(p_{\within} - p_{\across})^2 \\
                                   &\le\frac{\varsigma^2 k\theta_N^2}{N}.
\end{align*}
This completes the proof.
\end{proof}

\begin{lemma}\label{lem:M-vs-Mhat}
    We have
    \[
    \|\hat{M} - M\|_{\op} \le \frac{|\frac{\hat{\varsigma}}{\varsigma} - 1| \cdot \|M\|_{\op} +  \frac{N|\hat{p}_{\avg} - p_{\avg}|}{\varsigma}}{1 - |\frac{\hat{\varsigma}}{\varsigma} - 1|}.
    \]
\end{lemma}
\begin{proof}
Note that
\[
    \hat{M} - M = \bigg(\frac{\varsigma}{\hat{\varsigma}} - 1\bigg) M + \frac{p_{\avg} - \hat{p}_{\avg}}{\hat{\varsigma}} J_N
\]
and consequently,
\begin{align*}
    \|\hat{M} - M\|_{\op} &\le |\frac{\varsigma}{\hat{\varsigma}} - 1| \cdot \|M\|_{\op} + \frac{N|\hat{p}_{\avg} - p_{\avg}|}{\hat{\varsigma}}.
\end{align*}
Now
\[
    \big|\frac{\varsigma}{\hat{\varsigma}} - 1\big| = \frac{|\frac{\hat{\varsigma}}{\varsigma} - 1|}{\frac{\hat{\varsigma}}{\varsigma}} = \frac{|\frac{\hat{\varsigma}}{\varsigma} - 1|}{1 + \frac{\hat{\varsigma}}{\varsigma} - 1} \le \frac{|\frac{\hat{\varsigma}}{\varsigma} - 1|}{1 - |\frac{\hat{\varsigma}}{\varsigma} - 1|}
\]
and
\[
    \frac{N|\hat{p}_{\avg} - p_{\avg}|}{\hat{\varsigma}} \le \frac{N|\hat{p}_{\avg} - p_{\avg}|}{\varsigma(1 - |\frac{\hat{\varsigma}}{\varsigma} - 1|)}.
\]
The proof follows.
\end{proof}

\begin{proof}[Proof of Proposition~\ref{prop:M-vs-Mhat}]
First note that
\[
    |\frac{\hat{\varsigma}}{\varsigma} - 1| \le \frac{N|\hat{p}_{\avg} - p_{\avg}|}{\varsigma^2} + \frac{k \theta_N^2}{N} \le  C_{\delta_0} \frac{|\hat{p}_{\avg} - p_{\avg}|}{p_{\avg}} + \frac{k \theta_N^2}{N} \le C_{\delta_0} \frac{\sqrt{\log N}}{N^{1 - \beta}} + \frac{k \theta_N^2}{N}
\]
with probability at least $1 - O(N^{-2})$. Consequently, due to the assumption (i),
\[
    |\frac{\hat{\varsigma}}{\varsigma} - 1| \convas 0.
\]
We also note that
\[
    \|M\|_{\op} \le \theta_N + \|M - \bE M\|_{\op}.
\]
It follows that with probability at least $1 - O(N^{-2})$,
\begin{align*}
    |\frac{\hat{\varsigma}}{\varsigma} - 1| \cdot \frac{\|M\|_{\op}}{\theta_N} &\le |\frac{\hat{\varsigma}}{\varsigma} - 1| +  C_{\delta_0} \frac{\sqrt{\log N}}{N^{1 - \beta}} \frac{\|M - \bE M\|_{\op}}{\theta_N} + \frac{k \theta_N \|M - \bE M\|_{\op}}{N} \\
                                                                                                          &\le |\frac{\hat{\varsigma}}{\varsigma} - 1| +  C_{\delta_0} \frac{\sqrt{\log N}}{N^{1 / 2 - \beta}} \frac{1}{\theta_N} \frac{\|M - \bE M\|_{\op}}{\sqrt{N}} + \frac{k \theta_N}{\sqrt{N}} \frac{\|M - \bE M\|_{\op}}{\sqrt{N}}.
\end{align*}
Therefore under our assumptions,
\[
    |\frac{\hat{\varsigma}}{\varsigma} - 1| \cdot \frac{\|M\|_{\op}}{\theta_N} \convas 0.
\]
Also, with probability $\ge 1 - O(N^{-2})$, we have
\[
    \frac{N|\hat{p}_{\avg} - p_{\avg}|}{\varsigma} \le C_{\delta_0} \varsigma \frac{|\hat{p}_{\avg} - p_{\avg}|}{p_{\avg}} \le C_{\delta_0} \sqrt{N p_{\avg}} \frac{\sqrt{\log N}}{N^{1 - \beta}} \ll 1,
\]
since $\beta < 1/2$. Thus $\frac{N|\hat{p}_{\avg} - p_{\avg}|}{\varsigma} \convas 0$. From Lemma~\ref{lem:M-vs-Mhat}, it therefore follows that $\theta_N^{-1} \|\hat{M} - M\|_{\op} \convas 0$.  
\end{proof}

In view of Proposition~\ref{prop:BBP_Mhat}, the proof of Theorem~\ref{thm:gamma-aic-comm} is the same as that of Theorem~\ref{thm:gamma-aic} and hence is omitted.

\end{document}